\numberwithin{equation}{section}
\theoremstyle{definition}
\theoremstyle{theorem}
\newtheorem{theorem}{Theorem}[section]
\newtheorem{prop}[theorem]{Proposition}
\newtheorem{lemma}[theorem]{Lemma}
\newtheorem{coro}[theorem]{Corollary}
\newtheorem{prop-def}{Proposition-Definition}
\newtheorem{coro-def}{Corollary-Definition}[section]
\theoremstyle{definition}
\newtheorem{defn}[theorem]{Definition}
\newtheorem{remark}[theorem]{Remark}
\newtheorem{exam}[theorem]{Example}
\newcommand{\nc}{\newcommand}
\nc{\on}{\operatorname}
\nc{\tr}{\rm Tr}
\nc{\spa}{\rm span}
\nc{\ad}{{\rm ad}}
\nc{\dep}{{\rm dep}}
\nc{\diag}{{\rm diag}}
\nc{\Id}{{\rm Id}}
\nc{\Ixq}{I_{P}}
\nc{\tred}[1]{\textcolor{red}{#1}}
\nc{\tblue}[1]{\textcolor{blue}{#1}}
\nc{\tgreen}[1]{\textcolor{green}{#1}}
\nc{\tpurple}[1]{\textcolor{purple}{#1}}
\nc{\btred}[1]{\textcolor{red}{\bf #1}}
\nc{\btblue}[1]{\textcolor{blue}{\bf #1}}
\nc{\btgreen}[1]{\textcolor{green}{\bf #1}}
\nc{\btpurple}[1]{\textcolor{purple}{\bf #1}}
\newcommand{\efootnote}[1]{}
\renewcommand{\textbf}[1]{}
\newcommand{\delete}[1]{}
\nc{\dfootnote}[1]{{}}          
\nc{\ffootnote}[1]{\dfootnote{#1}}
\nc{\mfootnote}[1]{\footnote{#1}} 
\nc{\ofootnote}[1]{\footnote{\tiny Older version: #1}}
\nc{\mlabel}[1]{\label{#1}}  
\nc{\mcite}[1]{\cite{#1}}  
\nc{\mref}[1]{\ref{#1}}  
\nc{\mcite}[1]{\cite{#1}{{\bf{{\ }(#1)}}}}  
\nc{\mlabel}[1]{\label{#1}  
{\hfill \hspace{1cm}{\bf{{\ }\hfill(#1)}}}}
\nc{\mcite}[1]{\cite{#1}{{\bf{{\ }(#1)}}}}  
\nc{\mref}[1]{\ref{#1}{{\bf{{\ }(#1)}}}}  
\nc{\mbibitem}[1]{\bibitem[\bf #1]{#1}} 
\nc{\mkeep}[1]{\marginpar{{\bf #1}}} 
\nc{\repmap}{representative~map~}
\nc{\opa}{\ast} \nc{\opb}{\odot} \nc{\op}{\bullet} \nc{\pa}{\frakL}
\nc{\arr}{\rightarrow} \nc{\lu}[1]{(#1)}
\nc{\opc}{\sharp}\nc{\opd}{\natural}
\nc{\ope}{\circ}
\nc{\Mod}{\operatorname{-Mod}}
\nc{\bin}[2]{ (_{\stackrel{\scs{#1}}{\scs{#2}}})}  
\nc{\binc}[2]{ \left (\!\! \begin{array}{c} \scs{#1}\\
    \scs{#2} \end{array}\!\! \right )}  
\nc{\bincc}[2]{  \left ( {\scs{#1} \atop
    \vspace{-1cm}\scs{#2}} \right )}  
\nc{\bs}{\bar{S}} \nc{\cosum}{\sqsubset} \nc{\la}{\longrightarrow}
\nc{\rar}{\rightarrow} \nc{\dar}{\downarrow} \nc{\dprod}{**}
\nc{\dap}[1]{\downarrow \rlap{$\scriptstyle{#1}$}}
\nc{\md}{\mathrm{dth}} \nc{\uap}[1]{\uparrow
\rlap{$\scriptstyle{#1}$}} \nc{\defeq}{\stackrel{\rm def}{=}}
\nc{\disp}[1]{\displaystyle{#1}} \nc{\dotcup}{\
\displaystyle{\bigcup^\bullet}\ } \nc{\gzeta}{\bar{\zeta}}
\nc{\hcm}{\ \hat{,}\ } \nc{\hts}{\hat{\otimes}}
\nc{\barot}{{\otimes}} \nc{\free}[1]{\bar{#1}}
\nc{\uni}[1]{\tilde{#1}} \nc{\hcirc}{\hat{\circ}} \nc{\lleft}{[}
\nc{\lright}{]} \nc{\lc}{\lfloor} \nc{\rc}{\rfloor}
\nc{\curlyl}{\left \{ \begin{array}{c} {} \\ {} \end{array}
    \right .  \!\!\!\!\!\!\!}
\nc{\curlyr}{ \!\!\!\!\!\!\!
    \left . \begin{array}{c} {} \\ {} \end{array}
    \right \} }
\nc{\longmid}{\left | \begin{array}{c} {} \\ {} \end{array}
    \right . \!\!\!\!\!\!\!}
\nc{\onetree}{\bullet} \nc{\ora}[1]{\stackrel{#1}{\rar}}
\nc{\ola}[1]{\stackrel{#1}{\la}}
\nc{\ot}{\otimes} \nc{\mot}{{{\boxtimes\,}}}
\nc{\otm}{\overline{\boxtimes}} \nc{\sprod}{\bullet}
\nc{\scs}[1]{\scriptstyle{#1}} \nc{\mrm}[1]{{\rm #1}}
\nc{\margin}[1]{\marginpar{\rm #1}}   
\nc{\dirlim}{\displaystyle{\lim_{\longrightarrow}}\,}
\nc{\invlim}{\displaystyle{\lim_{\longleftarrow}}\,}
\nc{\mvp}{\vspace{0.3cm}} \nc{\tk}{^{(k)}} \nc{\tp}{^\prime}
\nc{\ttp}{^{\prime\prime}} \nc{\svp}{\vspace{2cm}}
\nc{\vp}{\vspace{8cm}} \nc{\proofbegin}{\noindent{\bf Proof: }}
\nc{\proofend}{$\blacksquare$ \vspace{0.3cm}}
\nc{\modg}[1]{\!<\!\!{#1}\!\!>}
\nc{\intg}[1]{F_C(#1)} \nc{\lmodg}{\!
<\!\!} \nc{\rmodg}{\!\!>\!}
\nc{\cpi}{\widehat{\Pi}}
\nc{\sha}{{\mbox{\cyr X}}}  
\nc{\shap}{{\mbox{\cyrs X}}} 
\nc{\shpr}{\diamond}    
\nc{\shp}{\ast} \nc{\shplus}{\shpr^+}
\nc{\shprc}{\shpr_c}    
\nc{\msh}{\ast} \nc{\zprod}{m_0} \nc{\oprod}{m_1}
\nc{\vep}{\varepsilon} \nc{\labs}{\mid\!} \nc{\rabs}{\!\mid}
\nc{\mmbox}[1]{\mbox{\ #1\ }}
\nc{\rsd}{\operatorname{RSD}}
\nc{\Ker}{\operatorname{Ker}}
\nc{\mult}{\operatorname{mult}}
\nc{\diff}{\mathfrak{Diff}}
\nc{\rank}{\operatorname{Rank}}
\nc{\stab}{\operatorname{Stab}}
\nc{\aut}{\operatorname{Aut}}
 \nc{\fp}{\operatorname{FP}}
\nc{\rchar}{\operatorname{char}}
\nc{\End}{\operatorname{End}}
\nc{\Fil}{\operatorname{Fil}}
\nc{\Mor}{\operatorname{Mor}\xspace}
 \nc{\gmzvs}{gMZV\xspace}
\nc{\gmzv}{gMZV\xspace}
 \nc{\mzv}{MZV\xspace}
\nc{\mzvs}{MZVs\xspace}
\nc{\Hom}{\operatorname{Hom}}
\nc{\id}{\operatorname{id}}
\nc{\im}{\operatorname{im}}
 \nc{\incl}{\operatorname{incl}}
 \nc{\map}{\operatorname{Map}}
\nc{\mchar}{\oparatorname{char}}
\nc{\nz}{\rm NZ}
 \nc{\supp}{\mathrm Supp}
 \nc{\Int}{\mathbf{Int}}
\nc{\Mon}{\mathbf{Mon}}
\nc{\Alg}{\mathbf{Alg}} \nc{\Bax}{\mathbf{Bax}} \nc{\bff}{\mathbf f}
\nc{\bfk}{{\bf k}} \nc{\bfone}{{\bf 1}} \nc{\bfx}{\mathbf x}
\nc{\bfy}{\mathbf y}
\nc{\base}[1]{\bfone^{\otimes ({#1}+1)}} 
\nc{\Cat}{\mathbf{Cat}}
\nc{\detail}{\marginpar{\bf More detail}
    \noindent{\bf Need more detail!}
    \svp}
\nc{\rbtm}{{shuffle }} \nc{\rbto}{{Rota-Baxter }}
\nc{\remarks}{\noindent{\bf Remarks: }} \nc{\Rings}{\mathbf{Rings}}
\nc{\Sets}{\mathbf{Sets}} \nc{\wtot}{\widetilde{\odot}}
\nc{\wast}{\widetilde{\ast}} \nc{\bodot}{\bar{\odot}}
\nc{\bast}{\bar{\ast}} \nc{\hodot}[1]{\odot^{#1}}
\nc{\hast}[1]{\ast^{#1}} \nc{\mal}{\mathcal{O}}
\nc{\tet}{\tilde{\ast}} \nc{\teot}{\tilde{\odot}}
\nc{\oex}{\overline{x}} \nc{\oey}{\overline{y}}
\nc{\oez}{\overline{z}} \nc{\oef}{\overline{f}}
\nc{\oea}{\overline{a}} \nc{\oeb}{\overline{b}}
\nc{\weast}[1]{\widetilde{\ast}^{#1}}
\nc{\weodot}[1]{\widetilde{\odot}^{#1}} \nc{\hstar}[1]{\star^{#1}}
\nc{\lae}{\langle} \nc{\rae}{\rangle}
\nc{\lf}{\lfloor}\nc{\rf}{\rfloor}
\nc{\cala}{{\mathcal A}} \nc{\calb}{{\mathcal B}}
\nc{\calc}{{\mathcal C}}
\nc{\cald}{{\mathcal D}} \nc{\cale}{{\mathcal E}}
\nc{\calf}{{\mathcal F}} \nc{\calg}{{\mathcal G}}
\nc{\calh}{{\mathcal H}} \nc{\cali}{{\mathcal I}}
\nc{\call}{{\mathcal L}} \nc{\calm}{{\mathcal M}}
\nc{\caln}{{\mathcal N}} \nc{\calo}{{\mathcal O}}
\nc{\calp}{{\mathcal P}} \nc{\calr}{{\mathcal R}}
\nc{\cals}{{\mathcal S}} \nc{\calt}{{\mathcal T}}
\nc{\calu}{{\mathcal U}} \nc{\calw}{{\mathcal W}} \nc{\calk}{{\mathcal K}}
\nc{\calx}{{\mathcal X}} \nc{\CA}{\mathcal{A}}
\nc{\fraka}{{\mathfrak a}} \nc{\frakA}{{\mathfrak A}}
\nc{\frakb}{{\mathfrak b}} \nc{\frakB}{{\mathfrak B}}
\nc{\frakD}{{\mathfrak D}} \nc{\frakg}{{\mathfrak g}}
\nc{\frakH}{{\mathfrak H}} \nc{\frakL}{{\mathfrak L}}
\nc{\frakM}{{\mathfrak M}} \nc{\bfrakM}{\overline{\frakM}}
\nc{\frakm}{{\mathfrak m}} \nc{\frakP}{{\mathfrak P}}
\nc{\frakN}{{\mathfrak N}} \nc{\frakp}{{\mathfrak p}}
\nc{\frakS}{{\mathfrak S}}
\font\cyr=wncyr10 \font\cyrs=wncyr7
\nc{\gl}[1]{\textcolor{red}{LG:#1}}
\nc{\ql}[1]{\textcolor{blue}{QL:#1}}
\nc{\red}[1]{\textcolor{red}{#1}}
\nc{\blue}[1]{\textcolor{blue}{#1}}
\g@addto@macro{\endabstract}{\@setabstract}
\newcommand{\authorfootnotes}{\renewcommand\thefootnote{\@fnsymbol\c@footnote}}%
\begin{document}
\begin{center}
  \LARGE
Modules of polynomial Rota-Baxter Algebras and matrix equations\par \bigskip

  \normalsize
  \authorfootnotes
\normalsize
  \authorfootnotes
 Xiaomin Tang$^{1,2,}$ \footnote{Corresponding author: {\it X. Tang. Email:} tangxm@hlju.edu.cn}
\par \bigskip
   \textsuperscript{1}School of  Mathematical Science, Heilongjiang University,
Harbin, 150080, P. R. China   \par

\textsuperscript{2} School of  Mathematical Science, Harbin Engineering University,
Harbin, 150001, P. R. China   \par

\end{center}

%
\begin{abstract} The all Rota-Baxter algebra structures on the polynomial algebra $R={\bf k}[x]$ are well known.
We study the finite dimensional modules of polynomial Rota-Baxter algebras $(\bfk[x],P)$ or $(x {\bf k} [x],P)$ of weight  nonzero since some cases of weight  zero have been studied. The main result shows that every module over the polynomial Rota-Baxter algebra $(\bfk[x],P)$ or $(x {\bf k} [x],P)$ is equivalent to the modules over a plane ${\bf k}\langle x,y \rangle/ I$ where $I$ is some ideal of free algebra ${\bf k}\langle x,y \rangle$.  Furthermore, we provide the classification of modules of polynomial Rota-Baxter algebras  of weight nonzero through solution to some matrix equation.
\end{abstract}

\noindent{\it Keywords:} Module, Rota-Baxter algebra; matrix equation.
\vspace{2mm}

\noindent{\it AMS subjclass (2000)}: 16W99;  45N05; 12H20



\setcounter{section}{0}

\section{Introduction}

A { Rota-Baxter algebra} (first known as a Baxter algebra) is an
associative algebra $R$ with a linear operator $P$ on $R$ that satisfies the { Rota-Baxter identity}
\begin{equation}
P(r)P(s)=P(P(r)s)+P(rP(s))+\lambda P(rs)\ \ \forall r,s\in R,
\label{eq:rbo}
\end{equation}
where $\lambda$, called the { weight}, is a fixed element in the base ring of the algebra $R$. The operator $P$ is called a Rota-Baxter operator of weight $\lambda$ on $R$.  We usually also say that the pair $(R,P)$ is a Rota-Baxter algebra of weight $\lambda$.

The study of Rota-Baxter algebras (operators) originated in the work of Baxter \cite{Ba} on fluctuation theory, and the algebraic study was started by Rota \cite{Rot1}. The theory of Rota-Baxter algebras develops a general framework of the algebraic and combinatorial structures underlying integral calculus which is like differential algebras for differential calculus. Rota-Baxter algebra also finds its applications in combinatorics, mathematics physics, operads and number
theory \cite{Ag3,A-M,And,BBGN,Ca,E,E-G1,E-G4,EGK3,EGM,G-Z,G-K1,G-K2,tang2019}. See~\cite{Guw,Gub} for further details.

As in the case of common algebraic structures such as associative algebras and Lie algebras, it is important to study the modules and representations of Rota-Baxter algebras. However, the module (or representation) theory of Rota--Baxter algebras is still in the early stage of development. The concept of modules (or representations) of Rota-Baxter algebras was introduced in \cite{G-Lin}. Further studies in this direction were pursued in~\cite{GGQ,LQ,QP} on regular-singular decompositions, geometric representations and derived functors of Rota-Baxter modules, especially those over the Rota-Baxter algebras of Laurent series and polynomials.

The polynomial algebra $\mathbf{k}[x]$ is an important object both in analysis and in algebra. It provides an ideal testing ground to see how an abstractly defined Rota-Baxter operator is related to the integration operator, because of its analytic connection, as functions, and its algebraic significance as a free object in the category of $\mathbf{k}$-algebras. Recently, the authors in \cite{rbopaiaoguo,yu} study the Rota-Baxter operators on the polynomial algebra $\mathbf{k}[x]$ that send monomials to monomials. Due to Theorem 3.5 in \cite{yu}, we have the following result.

\begin{prop}\label{thmnonzero}
Let $P$ be a nonzero  monomial linear operator on $\mathbf{k}[x]$.
Then $P$ is a Rota-Baxter operator of weight $\lambda\neq 0$ if and only if $P$ is one of the following cases:

\begin{enumerate}
\item\label{weightnzero1}  there exists $b\in \mathbf{k}\backslash\{0\}$ such that $P(x^n)=(-\lambda)^{1-n}b^n$ for all $n\in\mathbb{N}$;

\item\label{weightnzero2}   $P(x^n)=-\lambda x^n$ for all $n\in\mathbb{N}$;

\item\label{weightnzero3}   for all $n\in\mathbb{N}$,
\begin{equation*}
P(x^n)=
\begin{cases}
0,& n=0,\\
-\lambda x^n,& n\neq0;
\end{cases}
\end{equation*}

\item\label{weightnzero4}  for all $n\in\mathbb{N}$,
\begin{equation*}
P(x^n)=
\begin{cases}
-\lambda,& n=0,\\
0,& n\neq0.
\end{cases}
\end{equation*}
\end{enumerate}
\end{prop}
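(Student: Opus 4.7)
The plan splits into two directions. The reverse direction (each listed $P$ is indeed a Rota-Baxter operator of weight $\lambda$) is a direct verification on monomials $r=x^m$, $s=x^n$, and is routine case by case. The substance is the forward direction -- that the four cases exhaust all possibilities -- and my attack is to treat a single polynomial identity as the sole engine of the classification. I would write $P(x^n)=c_n x^{\beta(n)}$ with $c_n\in\mathbf{k}$ and $\beta(n)\in\mathbb{N}$ (with $\beta(n)$ irrelevant on the zero-locus), and set $S=\{n:c_n\neq 0\}$. Substituting $r=x^m$ and $s=x^n$ into (\ref{eq:rbo}) and using linearity produces the master relation
\begin{equation*}
c_m c_n\,x^{\beta(m)+\beta(n)} \;=\; c_m c_{\beta(m)+n}\,x^{\beta(\beta(m)+n)} + c_n c_{m+\beta(n)}\,x^{\beta(m+\beta(n))} + \lambda c_{m+n}\,x^{\beta(m+n)},
\end{equation*}
valid for every $(m,n)\in\mathbb{N}^2$. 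From this I would extract all constraints on $S$, on $\beta|_S$, and on the nonzero coefficients by choosing $(m,n)$ judiciously and comparing monomials.

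First I would specialise to the diagonal $m=n=0$, which forces either $c_0=0$ or $c_0=-\lambda$ together with $\beta(0)=0$. Taking $m=n\in S$ and comparing exponents then yields $\beta(2m)=2\beta(m)$ and $\beta(\beta(m)+m)=2\beta(m)$ wherever coefficients survive; combined with the fact that $m\notin S$ forces $c_{2m}=0$ (read off the master relation at $n=m$), this produces a clean dichotomy -- the technical heart of the argument: either $\beta\equiv 0$ on $S$ (``constant branch''), or $\beta(n)=n$ on $S$ (``identity branch''). The constant branch collapses the master relation on $S$ to $\lambda c_{m+n}=-c_mc_n$, a multiplicative recursion; with $c_0=-\lambda$ and $c_1=b\neq 0$ this solves uniquely to $c_n=(-\lambda)^{1-n}b^n$, giving Case (\ref{weightnzero1}), while the degenerate sub-case $S=\{0\}$ gives Case (\ref{weightnzero4}). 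In the identity branch, the diagonal $m=n\in S\setminus\{0\}$ forces $c_m=-\lambda$; the ``no-gap'' consequence $m\notin S\Rightarrow c_{2m}=0$, iterated with closure of $S\setminus\{0\}$ under addition, pins $S\setminus\{0\}=\mathbb{Z}_{>0}$, and the dichotomy $0\in S$ versus $0\notin S$ separates Cases (\ref{weightnzero2}) and (\ref{weightnzero3}).

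The main obstacle is the combinatorial bookkeeping: the three summands on the right of the master relation read $\beta$ at three different points, each of which may index into or out of $S$, so matching the exponent $\beta(m)+\beta(n)$ on the left against the surviving right-hand exponents splits into many sub-cases. A convenient discipline is to fix the minimal $n_0\in S$ and propagate consequences outward, using that whenever a right-hand exponent fails to match $\beta(m)+\beta(n)$, the corresponding coefficient is forced to vanish; this is what rules out the exotic partial-support scenarios (e.g.\ $S\setminus\{0\}$ equal to a proper sub-semigroup of $\mathbb{Z}_{>0}$) that are the most tempting source of gaps in the classification.
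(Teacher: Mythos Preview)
The paper does not prove this proposition at all: it is quoted as Theorem~3.5 of \cite{yu} and used as input, with no argument supplied. So there is no ``paper's own proof'' to compare your attempt against.

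That said, your direct attack is the natural one and is essentially how the classification in \cite{yu} proceeds: encode $P(x^n)=c_n x^{\beta(n)}$, expand the Rota--Baxter identity on monomial pairs, and read off $S=\{n:c_n\neq0\}$ and $\beta|_S$ from the resulting master relation. One correction to your outline: the tool you isolate in the identity branch, namely $m\notin S\Rightarrow 2m\notin S$, is too weak by itself to pin $S\setminus\{0\}=\mathbb{Z}_{>0}$ (it does not exclude, for instance, $S\setminus\{0\}$ contained in $3\mathbb{Z}_{>0}$). What the master relation actually gives when both $m,n\notin S$ is that the left side vanishes and the first two right-hand terms vanish, forcing $c_{m+n}=0$; i.e.\ the complement $\mathbb{N}\setminus S$ is closed under addition. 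With that stronger fact, if $n_0=\min(S\setminus\{0\})>1$ then $n_0=1+(n_0-1)$ with both summands in the complement yields the contradiction you need. The dichotomy $\beta|_S\equiv 0$ versus $\beta|_S=\mathrm{id}$ that you flag as the technical heart is correct and does follow from the master relation, but expect more sub-cases than your sketch suggests before it falls out cleanly.
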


On the other hand, recall that the authors in~\cite{QP} study the modules over a class of polynomial Rota-Baxter algebras of weight zero by studying the modules over the Jordan plane.  This inspires us to study the modules of polynomial Rota-Baxter algebras of weight  nonzero.

The main aim of this study is to investigate finite dimensional $(\bfk[x],P)$-modules or $(x\bfk[x],P)$-modules of weight nonzero. As is pointed out in ~\cite{QP} that the category of modules over a differential algebra is equivalent to the category of modules over its corresponding algebra of differential operators. Thus, our first step involves transforming the problem concerning modules of $(\bfk[x],P)$  or $(x\bfk[x],P)$  into the problem of representing some certain types of algebra in the usual sense. These problems are considered in Subsections \ref{Jordan} and \ref{txm-->Ozyy}, where we show that this problem is reduced to the case of $\lambda=1$ and so study the $({\bf k}[x],P)$-modules or $(x\bfk[x],P)$-modules are  equivalent to studying the modules of ${\bf k}\langle x,y \rangle/I$  for some  ideal of free algebra ${\bf k}\langle x,y \rangle$, which are not any special example of Ore extensions of ${\bf k}[x]$ and so the theory on such module (even on structure) has not attracted people's attention. This is different from the case of weight zero \cite{QP}. Finally, by solving some matrix equations we determine  corresponding module structures.

In what follows we assume that ${\bf k}$ is an algebraically closed fields of characteristic zero. Recall that the symbols $\mathbb{Z}$ and $\mathbb{N}$ represent the sets of  integers and nonnegative integers respectively.

\section{Rota-Baxter modules of $({\bf k}[x],P)$ } \label{Jordan}

In this section, after some basic definitions, we show that the modules of $(\bfk [x],P)$ are equivalent to modules of the planes $\mathcal{J}_1$ or $\mathcal{J}_2$.

\subsection{Modules of $(\bfk[x],P)$} \label{Jordan1}
\begin{defn}
{\rm Let $\bfk$ be a field and $(R,P)$ a Rota-Baxter $\bfk$-algebra of weight $\lambda$. A (left) Rota-Baxter module of
$(R,P)$ or simply an (left) $(R,P)$-module is a pair $(M,p)$, where $M$ is an
 $R$-module and $p: M \longrightarrow M$ is a $\bfk$-linear map that satisfies
\begin{equation}\label{eq:meq}
P(r)p(m) = p(P(r)m) +p(rp(m))+\lambda p(rm), \quad \forall ~r\in R,  \forall m\in M.
\end{equation}
}
\end{defn}
If we let $(M,p)$ be an $(R,P)$-module, then $M$ is a $\bfk$-vector space.
If $\dim_{\bfk}M < +\infty$, then $(M,p)$ is called a finite dimensional
$(R,P)$-module. In the following, all $(R,P)$-modules are
assumed to be finite dimensional.

\begin{remark} (see \cite{G-Lin})
This definition of Rota-Baxter module is consistent with the Eilenberg's approach to the definition of module, namely the semidirect sum $(R\oplus M, P+p)$ is a Rota-Baxter algebra. Moreover, its quotient by the Rota-Baxter ideal $(M,p)$ is isomorphic to the initial Rota-Baxter algebra $(R,P)$.
\end{remark}

\begin{prop}\label{tangtang1}
Suppose that $\lambda\neq 0$. Then $(M,p)$ is a module of Rota-Baxter algebra $(R,P)$ of weight $\lambda$ if and only if  $(M,\lambda^{-1}p)$ is a module of Rota-Baxter algebra $(R,\lambda^{-1}P)$ of weight $1$.
\end{prop}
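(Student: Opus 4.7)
The plan is to verify the equivalence by direct substitution, treating the ``if'' and ``only if'' directions symmetrically (indeed the map $(P,p)\mapsto(\lambda^{-1}P,\lambda^{-1}p)$ is its own rescaling trick since the inverse is multiplication by $\lambda$, which falls under the same lemma applied to the new algebra). So the real content is a single computation that shows the module identity~\eqref{eq:meq} of weight $\lambda$ is equivalent to the module identity of weight $1$ after rescaling both the Rota-Baxter operator and the module operator by $\lambda^{-1}$.

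First I would record the analogous statement at the algebra level: if $P$ satisfies the weight-$\lambda$ Rota-Baxter identity~\eqref{eq:rbo}, then $Q:=\lambda^{-1}P$ satisfies the weight-$1$ identity. This is a one-line check: multiplying \eqref{eq:rbo} through by $\lambda^{-2}$ gives
\[
Q(r)Q(s) \;=\; \lambda^{-2}P(P(r)s)+\lambda^{-2}P(rP(s))+\lambda^{-1}P(rs)
\;=\; Q(Q(r)s)+Q(rQ(s))+Q(rs),
\]
since $Q(Q(r)s)=\lambda^{-2}P(P(r)s)$ and $Q(rs)=\lambda^{-1}P(rs)$. So $(R,\lambda^{-1}P)$ is indeed a Rota-Baxter algebra of weight $1$, and the statement to be proved makes sense.

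Next, I would perform the analogous calculation at the module level. Set $q:=\lambda^{-1}p$. Multiplying the weight-$\lambda$ module identity \eqref{eq:meq} through by $\lambda^{-2}$ yields
\[
\lambda^{-2}P(r)p(m) \;=\; \lambda^{-2}p(P(r)m)+\lambda^{-2}p(rp(m))+\lambda^{-1}p(rm).
\]
Recognising $\lambda^{-2}P(r)p(m)=Q(r)q(m)$, $\lambda^{-2}p(P(r)m)=q(Q(r)m)$, $\lambda^{-2}p(rp(m))=q(rq(m))$, and $\lambda^{-1}p(rm)=q(rm)$, this reads
\[
Q(r)q(m) \;=\; q(Q(r)m)+q(rq(m))+q(rm),
\]
which is precisely the weight-$1$ module identity for $(M,q)$ over $(R,Q)$. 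This gives the ``only if'' direction.

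For the converse, I would apply the same rescaling in reverse: if $(M,q)$ is a weight-$1$ module over $(R,Q)$ with $Q=\lambda^{-1}P$ and $q=\lambda^{-1}p$, then applying the same argument with $\lambda$ replaced by $\lambda^{-1}$ (valid since $\lambda\neq 0$) gives back \eqref{eq:meq}. No step presents a genuine obstacle; the only care needed is to make sure the scalar $\lambda^{-2}$ is distributed so that exactly one factor of $\lambda^{-1}$ is absorbed into each of $P$ and $p$ in every term, which is just bookkeeping.
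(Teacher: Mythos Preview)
Your proof is correct and follows essentially the same approach as the paper: multiply the Rota-Baxter identity and the module identity by $\lambda^{-2}$, absorb one factor of $\lambda^{-1}$ into each occurrence of $P$ and $p$, and observe that the resulting equations are exactly the weight-$1$ identities for $(R,\lambda^{-1}P)$ and $(M,\lambda^{-1}p)$. Your write-up is slightly more detailed (introducing $Q$ and $q$ and checking each term explicitly), but the argument is the same.
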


\begin{proof}
If we multiply both sides of Equations (\ref{eq:rbo}) and (\ref{eq:meq}) by $\lambda^{-2}$ respectively, then we obtain
\begin{equation*}
(\frac{1}{\lambda}P)(r)(\frac{1}{\lambda}P)(s)=(\frac{1}{\lambda}P)((\frac{1}{\lambda}P)(r)s)+(\frac{1}{\lambda}P)(r(\frac{1}{\lambda}P)(s))+(\frac{1}{\lambda}P)(rs), \ \forall r,s\in R,
\end{equation*}
and
\begin{equation*}
(\frac{1}{\lambda}P)(r)(\frac{1}{\lambda}p)(m) = (\frac{1}{\lambda}p)((\frac{1}{\lambda}P)(r)m) +(\frac{1}{\lambda}p)(r(\frac{1}{\lambda}p)(m))+ (\frac{1}{\lambda}p)(rm), \ \forall r\in R,  \forall m\in M.
\end{equation*}
This, together with Equations (\ref{eq:rbo}) and (\ref{eq:meq}), yields the conclusion.
\end{proof}

Proposition \ref{tangtang1} tells us that the study of module of Rota-Baxter algebra $(R,P)$ of weight $\lambda$ is reduced to the case of $\lambda=1$.
Since we shall study the Rota-Baxter $(\bfk[x],P)$ modules of weight nonzero, so we will assume without loss of generality that $(\bfk[x],P)$ is a
Rota-Baxter algebra of weight $1$.

\begin{remark}\label{tangtang2}
By Proposition \ref{thmnonzero}, we will mainly consider the four Rota-Baxter algebras $(\bfk[x],P_i)$ of weight $1$,  where Rota-Baxter operators $P_i$, $i=1,2,3,4$ are defined by following:
\begin{enumerate}
\item\label{weightnzero1}  there exists $b\in \mathbf{k}\backslash\{0\}$ such that $P_1(x^n)=(-1)^{1-n}b^n$ for all $n\in\mathbb{N}$;

\item\label{weightnzero2}   $P_2(x^n)=- x^n$ for all $n\in\mathbb{N}$;

\item\label{weightnzero3}   for all $n\in\mathbb{N}$,
\begin{equation*}
P_3(x^n)=
\begin{cases}
0,& n=0,\\
- x^n,& n\neq0;
\end{cases}
\end{equation*}

\item\label{weightnzero4}  for all $n\in\mathbb{N}$,
\begin{equation*}
P_4(x^n)=
\begin{cases}
-1,& n=0,\\
0,& n\neq0.
\end{cases}
\end{equation*}
\end{enumerate}
\end{remark}

Basic concepts regarding an $R$-module can be defined in a similar manner for $(R,P)$-modules.
In particular, an $(R,P)$-module homomorphism
$\phi : (M,p) \longrightarrow (N,q)$ is an $R$-module homomorphism
$\phi: M \longrightarrow M$ that satisfies
$$
\phi \circ p = q \circ \phi.
$$
Furthermore, $(M,p)$ is isomorphic to $(N,q)$ if the homomorphism $\phi$ is bijective.
It is simple to check that $\bigoplus_{i=1}^{n}(M_{i},p_{i}) = (\bigoplus_{i=1}^{n} M_{i},\displaystyle  \sum_{i=1}^{n} p_{i}),$ where $\displaystyle \sum_{i=1}^{n}p_{i}$ is defined by
$$
\left(\sum_{i=1}^{n}p_{i} \right) (u_{1},\cdots ,u_{n}) = \sum_{i=1}^{n}p_{i}(u_{i}),
$$
is still an $(R,P)$-module and it is called the direct sum of $(R,P)$-modules $(M_{1},p_{1}), \cdots, (M_{n},p_{n})$.

\subsection{The planes $\mathcal{J}_1$ and $\mathcal{J}_2$ } \label{Jordan2}
\

Let $(\bfk[x],P_i), i=1,2,3,4$ be the polynomial Rota-Baxter algebras given by Remark \ref{tangtang2} and $\bfk \langle x,y\rangle$ the noncommutative polynomial algebra with variables $x$ and $y$. Let ${\Ixq}_i$ be the ideal of $\bfk \langle x,y\rangle$ generated by the set
\begin{equation}
\mathcal{X}_{i}=\{P_i(f)y-yP_i(f)-yfy-yf~|~f\in \bfk[x]\},
\end{equation}
and $\mathcal{J}_i = \bfk \langle x,y\rangle/{\Ixq}_i,\  i=1,2,3,4$.

For any $m\in \mathbb{N}$, by a simple computation we have
$$
P_i(x^{m})y-yP_i(x^{m})-yx^{m}y-yx^m=-yx^my-yx^{m}
$$
for $i=1,4$ and
$$
P_j(x^{m})y-yP_j(x^{m})-yx^{m}y-yx^m=-yx^my-x^{m}y
$$
for $j=2,3$.  Note that operators $P_i, i=1,2,3,4$ are $\bfk$-linear, so the set
$$
\widetilde{\mathcal{X}_{1}}=\{yx^{m}+yx^my~|~m\in \mathbb{N}\}
$$
also generates ${\Ixq}_i$ for every $i\in \{1,4\}$ and the set
$$
\widetilde{\mathcal{X}_{2}}=\{x^{m}y+yx^my~|~m\in \mathbb{N}\}
$$
also generates ${\Ixq}_j$ for every $j\in \{2,3\}$. Further, we have

\begin{lemma}\label{lem:simplify}
If we let $\widehat{\mathcal{X}_1}=\{y+y^2, xy+yxy\}$ and  $\widehat{\mathcal{X}_2}=\{y+y^2, yx+yxy\}$, then ${\Ixq}_i$ is generated by $\widehat{\mathcal{X}_1}$ for $i=1,4$ and
${\Ixq}_j$ is generated by $\widehat{\mathcal{X}_2}$ for $j=2,3$. Namely, we have
$$ \mathcal{J}_1= \mathcal{J}_4={\bf k}\langle x,y \rangle/(y+y^2, xy+yxy),\ \ \ \mathcal{J}_2= \mathcal{J}_4={\bf k}\langle x,y \rangle/(y+y^2, yx+yxy).$$
\end{lemma}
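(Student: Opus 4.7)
Since $\widetilde{\mathcal{X}_i}$ has already been shown to generate $I_{P_i}$, it suffices to prove two inclusions: $\widehat{\mathcal{X}_i} \subseteq I_{P_i}$, and every $yx^m + yx^my$ (resp.\ $x^my + yx^my$) for $m \geq 0$ lies in the two-sided ideal $(\widehat{\mathcal{X}_i})$. The first inclusion is immediate by inspection, since the two elements of $\widehat{\mathcal{X}_i}$ correspond to the $m = 0$ and $m = 1$ members of $\widetilde{\mathcal{X}_i}$.

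For the second inclusion I would argue by induction on $m$, passing first to the idempotent $e := -y$ in order to expose the underlying structure. The relation $y + y^2 = 0$ becomes $e^2 = e$, while the second relation in $\widehat{\mathcal{X}_i}$ takes a one-sided Peirce form, either $e x = e x e$ or $x e = e x e$. Under this substitution, the members of $\widetilde{\mathcal{X}_i}$ translate into the vanishings $e x^m (1 - e) = 0$ for all $m \in \mathbb{N}$, or symmetrically $(1 - e) x^m e = 0$. The inductive step is then a one-line associativity calculation in the quotient; assuming $e x^m (1 - e) = 0$ together with $e x = e x e$, one computes
\[ e x^{m+1} (1 - e) = (e x)\, x^m (1 - e) = (e x e)\, x^m (1 - e) = (e x)\bigl(e x^m (1 - e)\bigr) = (ex)\cdot 0 = 0. \]
The symmetric case is handled identically, and the base cases $m = 0, 1$ are supplied by $\widehat{\mathcal{X}_i}$ itself, so the vanishings hold for every $m$.

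The main obstacle is recognising the right inductive hypothesis. Direct manipulation of $y x^m + y x^m y$ in the free algebra $\bfk\langle x, y\rangle$ only produces expressions of the form $y x^j y x^k y$ that reshuffle tautologically modulo $\widehat{\mathcal{X}_i}$, so the infinite family of reductions never stabilises. Rewriting in terms of the idempotent $e = -y$ reveals the Peirce decomposition with respect to $e$; in that language the generating family becomes a single one-sided Peirce condition that is stable under multiplication by $x$, and the induction closes in a single line.
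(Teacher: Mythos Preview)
Your argument is correct and runs on the same induction on $m$ as the paper's proof. The paper stays in $\bfk\langle x,y\rangle$ and writes down the explicit identity
\[
yx^{m+1}y + yx^{m+1} \;=\; (yxy+yx)(x^{m}y+x^{m}) - yx(yx^{m}y+yx^{m})
\]
(and its mirror image for the other case), so your closing claim that direct manipulation in the free algebra ``never stabilises'' is too pessimistic: a single such identity handles the inductive step. Your substitution $e := -y$ and the Peirce language repackage this same step as $ex^{m+1}(1-e) = (ex)\bigl(ex^{m}(1-e)\bigr) = 0$, which is cleaner and more conceptual, but not a genuinely different route.
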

\begin{proof}
Denote by $\widehat{{\Ixq}_1}$ or $\widehat{{\Ixq}_2}$ the ideal of $\bfk \langle x,y\rangle$ generated by the set  $\widehat{\mathcal{X}_1}$ or $\widehat{\mathcal{X}_2}$ respectively. As above we know that
$
\widetilde{\mathcal{X}_{1}}=\{yx^{m}+yx^my~|~m\in \mathbb{N}\}
$
generates ${\Ixq}_i$ for every $i\in \{1,3\}$ and
$
\widetilde{\mathcal{X}_{2}}=\{x^{m}y+yx^my~|~m\in \mathbb{N}\}
$ generates ${\Ixq}_j$ for every $j\in \{2,4\}$.
Therefore, it is clear by $\widehat{\mathcal{X}_1}\subset \widetilde{\mathcal{X}_{1}}$ and
$\widehat{\mathcal{X}_2}\subset \widetilde{\mathcal{X}_{2}}$ that $\widehat{{\Ixq}_1}\subseteq {\Ixq}_i$ for $i\in \{1,3\}$
and $\widehat{{\Ixq}_2}\subseteq {\Ixq}_j$ for $j\in \{2,4\}$.

 Conversely, we need to show that $\widehat{{\Ixq}_1}\supseteq {\Ixq}_i$ for $i\in \{1,3\}$
and $\widehat{{\Ixq}_2}\supseteq {\Ixq}_j$ for $j\in \{2,4\}$. We use induction on $m$ to show that
$yx^{m}+yx^my\in \widehat{{\Ixq}_1}$ and $x^{m}y+yx^my\in \widehat{{\Ixq}_2}$.
The cases for $m=0,1$ are obvious.  Suppose that any $m \geq 1$, $yx^{m}+yx^my\in \widehat{{\Ixq}_1}$ and
$x^{m}y+yx^my\in \widehat{{\Ixq}_2}$.    Note that
\begin{eqnarray*}
&&yx^{m+1}y+yx^{m+1} \\
&&= (yxy+yx)(x^{m}y+x^{m})-yx(yx^{m}y+yx^{m}),
\end{eqnarray*}
and
\begin{eqnarray*}
&&yx^{m+1}y+x^{m+1}y \\
&&= (yx^{m}+x^{m})(yxy+xy)-(yx^{m}y+x^{m}y)xy.
\end{eqnarray*}
Then, by the induction hypothesis, we obtain $yx^{m+1}y+yx^{m+1} \in \widehat{{\Ixq}_1}$ and $yx^{m+1}y+x^{m+1}y \in \widehat{{\Ixq}_2}$. Therefore, we have $\widehat{{\Ixq}_1}\subseteq {\Ixq}_i$ for $i\in \{1,4\}$
and $\widehat{{\Ixq}_2}\subseteq {\Ixq}_j$ for $j\in \{2,3\}$. The proof is completed.
\end{proof}

\begin{coro}\label{lem:ker}
For any $m,k \in \mathbb{N}$, the equation $yx^{m} = y x^{m}(-y)^k$ holds in $\mathcal{J}_i$ and the equation $x^{m}y = (-y)^kx^{m}y$ holds in $\mathcal{J}_j$ for $i\in \{1,4\}$ and $j\in \{2,3\}$.
\end{coro}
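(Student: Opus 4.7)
The plan is a routine induction on $k$, using the ideal relations already established in Lemma \ref{lem:simplify} (or rather the sets $\widetilde{\mathcal{X}_1}, \widetilde{\mathcal{X}_2}$ that generate the ideals). First I would restate what the lemma gives us in the two quotient algebras: in $\mathcal{J}_i$ for $i\in\{1,4\}$ the relation $yx^m + yx^m y = 0$ holds for every $m\in\mathbb{N}$, which rearranges to $yx^m\cdot y = -yx^m$, equivalently $yx^m(-y) = yx^m$. Dually, in $\mathcal{J}_j$ for $j\in\{2,3\}$ the relation $x^m y + yx^m y = 0$ holds, giving $y\cdot x^m y = -x^m y$, equivalently $(-y)\cdot x^m y = x^m y$.

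Next, I would fix $m$ and run induction on $k$. The base case $k=0$ is the tautology $yx^m = yx^m$ (respectively $x^m y = x^m y$). For the inductive step in $\mathcal{J}_i$, assuming $yx^m = yx^m(-y)^k$, I would write
\[
yx^m(-y)^{k+1} = \bigl(yx^m(-y)^k\bigr)(-y) = yx^m\cdot(-y) = yx^m,
\]
where the last equality is precisely the rearranged relation from Lemma \ref{lem:simplify}. The analogous argument in $\mathcal{J}_j$ multiplies on the left by $(-y)$ instead, using $(-y)\cdot x^m y = x^m y$.

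There is essentially no obstacle here: the corollary is a one-line consequence of the fact that the generators $yx^m + yx^m y$ (resp.\ $x^m y + yx^m y$) lie in the defining ideal, so that $y$ acts as $-1$ on the right of $yx^m$ (resp.\ on the left of $x^m y$) in the quotient. The only thing to be careful about is ensuring the correct sidedness in each of the two cases and tracking the two pairs of indices $\{1,4\}$ versus $\{2,3\}$ consistently with the lemma.
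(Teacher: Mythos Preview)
Your proposal is correct and takes essentially the same approach as the paper: the paper's proof simply notes that the relations from Lemma~\ref{lem:simplify} give $yx^m = yx^m(-y)$ in $\mathcal{J}_i$ ($i=1,4$) and $x^m y = (-y)x^m y$ in $\mathcal{J}_j$ ($j=2,3$), and then asserts the conclusion follows, which is exactly your induction on $k$ spelled out more explicitly.
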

\begin{proof}
By the above results, one has for $m \in \mathbb{N}$, the equation $yx^{m} = y x^{m}(-y)$ holds in $\mathcal{J}_i, i=1,4$ and $x^{m}y = (-y)x^{m}y$  holds in $\mathcal{J}_j, j=2,3$.
It  follows that conclusion.
\end{proof}

\subsection{The relations between $\mathcal{J}_i$-modules and $(\bfk[x],P_i)$-modules}
\

Now we establish the relationship between $\mathcal{J}_i$-modules and $(\bfk[x],P_i)$-modules for $i=1,2,3,4$. Recall that a $(\bfk[x],P_i)$-module is a pair $(M,p_i)$, where $M$ is a $\bfk[x]$-module and $p_i \in {\rm End}_{\bfk}(M)$ such that
\begin{equation}
P_i(f)p_i(v)=p_i(P_i(f)v+fp_i(v)+f v), \quad \forall f \in \bfk[x],~~ \forall v \in M.
\end{equation}

\begin{prop}\label{prop:modulerel}
Take $i\in \{1,2,3,4\}$. Let $M$ be a $\mathcal{J}_i$-module. Define a $\bfk$-linear map $p_i$ on $M$ by
$$
p_i(v) = yv, \quad \forall v \in M.
$$
Then, $(M,p_i)$ is a $(\bfk[x],P_i)$-module. Conversely, if $(M,p_i)$ is a $(\bfk[x],P_i)$-module and we define
$$
yv = p_i(v), \quad \forall v \in M,
$$
then $M$ is a $\mathcal{J}_i$-module.
\end{prop}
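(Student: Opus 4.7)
The plan is to treat both directions of the equivalence as direct unpacking of the definitions, the point being that the generators of $\Ixq_i$ were chosen precisely to encode the Rota-Baxter module axiom. Concretely, the defining relation
\[
P_i(f)y - yP_i(f) - yfy - yf \in \Ixq_i, \qquad f \in \bfk[x],
\]
will be shown to be, after substituting the action of $y$ by $p_i$, identical to
\[
P_i(f)p_i(v) - p_i\bigl(P_i(f)v + f\,p_i(v) + fv\bigr).
\]

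For the forward direction, I would start with a $\mathcal{J}_i$-module $M$. Since $\mathcal{J}_i$ is a quotient of $\bfk\langle x,y\rangle$, the element $x$ acts on $M$ and makes it a $\bfk[x]$-module; the formula $p_i(v) := yv$ then defines a $\bfk$-linear endomorphism. I would verify the Rota-Baxter module axiom by computing, for $f\in \bfk[x]$ and $v\in M$,
\[
P_i(f)p_i(v) = P_i(f)(yv) = \bigl(yP_i(f) + yfy + yf\bigr)v = p_i\bigl(P_i(f)v + f\,p_i(v) + fv\bigr),
\]
where the middle equality uses the relation in $\mathcal{J}_i$ coming from the generator of $\Ixq_i$ associated with $f$, and the last equality uses the definition of $p_i$ together with $\bfk$-linearity.

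For the converse, given a $(\bfk[x],P_i)$-module $(M,p_i)$, I would first extend the $\bfk[x]$-action on $M$ to an action of the free algebra $\bfk\langle x,y\rangle$ by letting $y$ act as $p_i$; such an extension is automatic because $\bfk\langle x,y\rangle$ is free on $x,y$. It then remains to check that this action descends to $\mathcal{J}_i = \bfk\langle x,y\rangle/\Ixq_i$, i.e.\ that every generator $P_i(f)y - yP_i(f) - yfy - yf$ of $\Ixq_i$ acts as zero on $M$. Evaluating on a vector $v$ and using the definition $y\cdot = p_i(\cdot)$ together with $\bfk$-linearity of $p_i$, this expression equals $P_i(f)p_i(v) - p_i(P_i(f)v + f p_i(v) + fv)$, which vanishes by the Rota-Baxter module identity (\ref{eq:meq}).

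There is really no substantial obstacle here: both directions are symmetric manipulations and reduce to the observation that the generating set $\mathcal{X}_i$ of $\Ixq_i$ was designed to mirror the defining identity of a Rota-Baxter module. The only minor point to be careful about is that in the converse direction one should note that it suffices to check the relations on the generators $P_i(f)y - yP_i(f) - yfy - yf$ of $\Ixq_i$ rather than on arbitrary elements, which is immediate since the action of $\bfk\langle x,y\rangle$ on $M$ is a ring homomorphism to $\End_{\bfk}(M)$.
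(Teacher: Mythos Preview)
Your proposal is correct and follows essentially the same approach as the paper: both directions amount to translating the generating relations of $\Ixq_i$ into the Rota-Baxter module identity and vice versa. The paper's proof is slightly terser---in the forward direction it checks the identity only on monomials $x^m$ (implicitly extending by linearity), and in the converse it simply asserts $\Ixq_i\subseteq\mathrm{ann}\,M$---but the content is identical to what you wrote.
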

\begin{proof}
We note that the equations
$$
P_i(x^{m})y-yP_i(x^{m})-yx^{m}y-yx^m =0, \ m \in \mathbb{N}
$$
hold in $\mathcal{J}_i$. Thus, for any $v \in M$, we have
$$
(P_i(x^{m})y-yP_i(x^{m})-yx^{m}y-y x^m)(v)=0,
$$
i.e.,
$$
P_i(x^{m})p_i(v)=p_i(P_i(x^{m})v+x^{m}p_i(v)+x^m v).
$$
Hence, $(M,p_i)$ is a $(\bfk[x],P_i)$-module.

Conversely, suppose that $M$ is a $\bfk\langle x,y \rangle$-module. Since $(M,p_i)$ is a $(\bfk[x],P_i)$-module, so we have
$$
{\Ixq}_i \subseteq {\rm ann} M = \{F \in \bfk\langle x,y\rangle~|~Fv=0, \mbox{for all}~v \in M\}.
$$
Thus, $M$ is a $\mathcal{J}_i$-module.
\end{proof}

Due to Proposition \ref{prop:modulerel}, the study of $(\bfk[x],P_i)$-modules becomes the study of $\mathcal{J}_i$-modules in the usual sense for $i\in \{1,2,3,4\}$. By Proposition \ref{prop:modulerel} and Lemma \ref{lem:simplify}, we have the following

\begin{coro}\label{thm:sim}
Let $M$ be a $\bfk[x]$-module and $p_i \in {\rm End}_{\bfk}(M)$. Then for $i\in \{1,2,3,4\}$, $(M,p_i)$ is a $(\bfk[x],P_i)$-module if and only if $p_i^2=-p_i$ and
\begin{equation}\label{equ:main}
p_ix=-p_ixp_i \text{\ if } i=1,4; \text{ or } xp_i=-p_ixp_i \text{\ if } i=2,3.
\end{equation}
\end{coro}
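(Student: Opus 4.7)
My plan is to combine Proposition~\ref{prop:modulerel} and Lemma~\ref{lem:simplify}. First, by Proposition~\ref{prop:modulerel}, equipping $M$ with a $(\bfk[x],P_i)$-module structure via the map $p_i$ is equivalent to extending the $\bfk[x]$-action to a $\mathcal{J}_i$-module structure in which $y$ acts as $p_i$. The corollary therefore reduces to translating the defining relations of $\mathcal{J}_i$ into operator identities on $M$.

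Next I would invoke Lemma~\ref{lem:simplify}, which presents each $\mathcal{J}_i$ as $\bfk\langle x,y\rangle$ modulo just two generators: $y+y^2$ together with a single quadratic word $w_i$ in $x$ and $y$, whose specific shape depends on whether $i\in\{1,4\}$ or $i\in\{2,3\}$. Hence $M$ is a $\mathcal{J}_i$-module exactly when both generators act as zero on every $v\in M$. The first generator $y+y^2$ yields $p_i+p_i^2=0$, i.e.\ $p_i^2=-p_i$, at once. For the second, I would interpret each monomial as a composition of endomorphisms, writing $x$ for the multiplication-by-$x$ operator on $M$, so that $yxy$ acts as $p_i\circ x\circ p_i$, $yx$ as $p_i\circ x$, and $xy$ as $x\circ p_i$; the vanishing of $w_i$ on $M$ then becomes precisely the identity~\eqref{equ:main}, the choice of side ($p_ix=-p_ixp_i$ versus $xp_i=-p_ixp_i$) being dictated by whether $y$ sits on the left or the right of $x$ in $w_i$.

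I do not anticipate any serious obstacle here, because the nontrivial reduction — that these two generators suffice to cut out the full ideal $I_{P_i}$ — has already been performed inside Lemma~\ref{lem:simplify} by induction on $m$, and each direction of the ``iff'' is just reading that reduction forwards or backwards. The only care needed is keeping the compositional order straight when converting noncommutative monomials into endomorphism identities, so that the translation of $w_i$ produces the correct side of~\eqref{equ:main} in each of the two cases.
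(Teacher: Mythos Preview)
Your proposal is correct and matches the paper's own treatment: the paper states this corollary immediately after Proposition~\ref{prop:modulerel} with no separate proof, simply noting that it follows by combining Proposition~\ref{prop:modulerel} with Lemma~\ref{lem:simplify}. Your write-up fills in exactly the intended mechanics---reading off the two ideal generators of $\mathcal{J}_i$ and translating them into operator identities via the left action $y\mapsto p_i$---and your caution about compositional order is the only point requiring care.
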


\begin{coro}\label{coro:onedim}
If $(M,p_i)$ is a $1$-dimensional $(\bfk[x],P_i)$-module, then $p_i=0$ or $p_i=-I_M,\ i=1,2,3,4$.
\end{coro}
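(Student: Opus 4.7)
The plan is to invoke Corollary \ref{thm:sim} and reduce the problem to an algebraic equation on a scalar. Since $\dim_{\bfk} M=1$ and $\bfk$ is a field, every $\bfk$-linear endomorphism of $M$ is scalar multiplication; write $p_i=\alpha\,\mathrm{Id}_M$ for some $\alpha\in\bfk$, and let $\beta\in\bfk$ be the scalar by which $x$ acts on $M$.

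The first condition $p_i^2=-p_i$ from Corollary \ref{thm:sim} becomes $\alpha^2=-\alpha$, i.e., $\alpha(\alpha+1)=0$, so $\alpha\in\{0,-1\}$. This already yields the two claimed possibilities $p_i=0$ and $p_i=-\mathrm{Id}_M$. It remains only to check that the second relation in \eqref{equ:main} imposes no further constraint. For $i\in\{1,4\}$ the relation $p_ix=-p_ixp_i$ becomes $\alpha\beta=-\alpha^2\beta$, i.e., $\alpha\beta(1+\alpha)=0$, which holds automatically whenever $\alpha\in\{0,-1\}$; the case $i\in\{2,3\}$ is identical with $xp_i$ in place of $p_ix$.

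The argument is essentially immediate once Corollary \ref{thm:sim} is in hand, so there is no real obstacle; the only thing to be careful about is noting that the second relation in \eqref{equ:main} is a consequence of $\alpha^2+\alpha=0$ and therefore does not cut down the list of solutions.
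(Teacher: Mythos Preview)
Your proof is correct and is exactly the argument the paper intends: Corollary~\ref{coro:onedim} is stated without proof as an immediate consequence of Corollary~\ref{thm:sim}, and your reduction to the scalar equation $\alpha^2+\alpha=0$ (together with the observation that \eqref{equ:main} then holds automatically) is precisely that immediate verification.
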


\subsection{The classicization of $(\bfk[x],P_i)$-modules} \label{class}
\

Take $i\in \{1,2,3,4\}$. Proposition \ref{prop:modulerel} shows that studying $(\bfk[x],P_i)$-modules is equivalent to studying modules of the plane $\mathcal{J}_i$-module in the usual sense. Thus, descriptions of the $\mathcal{J}_i$-module can be interpreted in terms of the $(\bfk[x],P)$-module.
Any $\mathcal{J}_i$-module can be regarded as both a $\bfk[x]$-module and a $\bfk[y]$-module, but the role of action $x$ is different from the role of action $y$. Our method aims to determine the $\mathcal{J}_i$-module structures on a given $\bfk[y]$-module with the action $y$.

For a $\bfk[x]$-module $M$, let $\tau(v)=xv, \forall v \in M$, and thus $\tau \in {\rm End}_{\bfk}(M)$. It is well known that a $\bfk[x]$-module $M$ can be regarded as a $\bfk$-vector space $M$ with a $\bfk$-linear map $\tau \in {\rm End}_{\bfk}(M)$ and $f(x)v=f(\tau)v$, $f(x) \in \bfk[x]$. In the following, the linear map induced by the action of $x$ is always denoted by $\tau$. Therefore, a $(\bfk[x],P)$-module $(M,p_i)$ can be regarded as a $\bfk$-vector space $M$ with two $\bfk$-linear maps, $\tau$ and $p_i$. By Corollary \ref{thm:sim}, we obtain the following:

\begin{prop}\label{tangtang3}
Take $i\in \{1,2,3,4\}$. Let $M$ be a $\bfk[x]$-module, $p_i:M\longrightarrow M$ a $\bfk$-linear map, and fix a $\bfk$-basis $v_{1}, v_{2}, \cdots, v_{n}$ of $M$. Let the matrices of $\tau$ and $p_i$ corresponding to the basis $v_{1},v_{2},\cdots,v_{n}$ be $A$ and $B$, respectively. Then, $(M,p_i)$ is a $(\bfk[x],P_i)$-module if and only if
$B^2=-B$ and
$$
BA=-BAB \text{\ if } i=1,4; \text{ or } AB=-BAB \text{\ if } i=2,3.
$$
\end{prop}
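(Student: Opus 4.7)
The plan is to translate the operator-level criterion of Corollary~\ref{thm:sim} into a matrix-level criterion by the standard correspondence between linear endomorphisms of $M$ and $n \times n$ matrices over $\bfk$ obtained after fixing the basis $v_1, \ldots, v_n$. Concretely, if $T \in \End_{\bfk}(M)$ has matrix $M_T$ with respect to this basis (using column-vector convention), then the composition $S \circ T$ has matrix $M_S M_T$; this turns the ring $\End_{\bfk}(M)$ into an isomorphic copy of the matrix algebra $M_n(\bfk)$ via a $\bfk$-algebra isomorphism $\Phi$.

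First I would apply Corollary~\ref{thm:sim}, which states that $(M, p_i)$ is a $(\bfk[x], P_i)$-module if and only if $p_i^2 = -p_i$ together with $p_i \tau = -p_i \tau p_i$ when $i \in \{1,4\}$, or $\tau p_i = -p_i \tau p_i$ when $i \in \{2,3\}$ (where I have rewritten the action of $x$ on $M$ as the endomorphism $\tau$, since by definition $\tau(v) = xv$ and $f(x) \cdot v = f(\tau)(v)$ for any $f \in \bfk[x]$). Next I would apply $\Phi$ to these endomorphism identities: since $\Phi(p_i) = B$, $\Phi(\tau) = A$, and $\Phi$ respects composition and scalar multiplication, the relation $p_i^2 = -p_i$ becomes $B^2 = -B$, the relation $p_i \tau = -p_i \tau p_i$ becomes $BA = -BAB$, and the relation $\tau p_i = -p_i \tau p_i$ becomes $AB = -BAB$. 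Conversely, since $\Phi$ is an isomorphism, any matrix identity lifts back to the corresponding operator identity, giving both directions of the equivalence.

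There is essentially no obstacle here beyond bookkeeping: the only point requiring a moment of care is the order of factors in the matrix product, which is fixed by the convention that $\Phi(S \circ T) = \Phi(S)\Phi(T)$. Once this convention is made explicit, the proof amounts to a one-line invocation of Corollary~\ref{thm:sim} followed by the observation that $\Phi$ is a $\bfk$-algebra isomorphism, so no additional computation is required.
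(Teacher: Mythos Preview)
Your proposal is correct and matches the paper's approach exactly: the paper simply states that Proposition~\ref{tangtang3} is obtained from Corollary~\ref{thm:sim}, i.e., by translating the operator identities $p_i^2=-p_i$ and $p_i\tau=-p_i\tau p_i$ (resp.\ $\tau p_i=-p_i\tau p_i$) into matrix identities via the chosen basis. Your explicit mention of the algebra isomorphism $\Phi:\End_{\bfk}(M)\to M_n(\bfk)$ and the care about composition order is, if anything, slightly more detailed than what the paper provides.
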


\begin{remark}
Recall that a $\bfk$-linear operator $p$ on a module $M$ is called {\it quasi-idempotent of weight $0\neq \lambda\in \bfk$} if $p^2+\lambda p=0$. For $\mu\in \bfk$, let
\begin{equation*}
M_\mu\colon = \{x\in M\,|\, p(x)=\mu x\}
\end{equation*}
denote the eigenspace of $M$ for the eigenvalue $\mu$.
A Rota-Baxter operator $P$ of weight $\lambda$ in a $\bfk$-algebra $R$  is called {\it quasi-idempotent}~\cite{A-M} if $P^2+\lambda P=0$. If $p$ is quasi-idempotent, then we have the decomposition $M=M_{-\lambda} \oplus M_0$ which will be called the {\it regular-singular decomposition}. A more detailed study in this case can be found in~\cite{LQ}.  From Corollary  \ref{thm:sim} we know that $p_i$  in $(\bfk[x],P_i)$-module $(M,p_i)$ is quasi-idempotent with $\lambda=1$.
\end{remark}

\begin{theorem} \label{tang1}
Suppose that $i\in \{1,2,3,4\}$ and $n$ is a positive integer. Then $(M,p_i)$ is a $n$ dimensional $(\bfk[x],P_i)$-module if and only if the matrices $A$ and $B$ of $\tau$ and $p_i$ corresponding to a $\bfk$-basis of $M$ have the following forms:

(i) When $i=1, 4$, for some $k\in \mathbb{N}$,
$$A=\begin{bmatrix} A_1 & 0\\ A_3& A_4 \end{bmatrix}, \ \ \ B=\begin{bmatrix} -I_k & \\ & 0 \end{bmatrix} $$
where $A_1\in M_k(\bfk)$ and $A_4\in M_{n-k}(\bfk)$;

(ii) When $i=2, 3$, for some $k\in \mathbb{N}$,
$$A=\begin{bmatrix} A_1 & A_2\\ 0& A_4 \end{bmatrix}, \ \ \ B=\begin{bmatrix} -I_k & \\ & 0 \end{bmatrix} $$
where $A_1\in M_k(\bfk)$ and $A_4\in M_{n-k}(\bfk)$.
\end{theorem}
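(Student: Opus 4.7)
The plan is to leverage Proposition~\ref{tangtang3}, which reduces the problem to finding a normal form for a pair of matrices $(A,B)$ satisfying the relations $B^2=-B$ together with $BA=-BAB$ (for $i=1,4$) or $AB=-BAB$ (for $i=2,3$). The whole argument is a linear-algebra normalization of $B$ followed by a block computation with $A$.

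First, I would observe that $B^2=-B$ means $B$ annihilates the polynomial $t(t+1)$. Since this polynomial has distinct roots over the algebraically closed field $\bfk$ (of characteristic zero), $B$ is diagonalizable with eigenvalues in $\{0,-1\}$. Thus there exists a $\bfk$-basis $v_1,\dots,v_n$ of $M$ in which $B=\mathrm{diag}(-I_k,0)$ for some $0\le k\le n$ (where $k$ is the dimension of the $(-1)$-eigenspace).

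Next, with $B$ in this normal form, write $A$ in a matching block decomposition
\[
A=\begin{bmatrix}A_1 & A_2\\ A_3 & A_4\end{bmatrix},\qquad A_1\in M_k(\bfk),\ A_4\in M_{n-k}(\bfk).
\]
A direct block multiplication gives
\[
BA=\begin{bmatrix}-A_1 & -A_2\\ 0 & 0\end{bmatrix},\qquad AB=\begin{bmatrix}-A_1 & 0\\ -A_3 & 0\end{bmatrix},\qquad BAB=\begin{bmatrix}A_1 & 0\\ 0 & 0\end{bmatrix}.
\]
For $i\in\{1,4\}$ the equation $BA=-BAB$ then forces $-A_2=0$, yielding the lower-triangular block form required in part (i); for $i\in\{2,3\}$ the equation $AB=-BAB$ forces $-A_3=0$, yielding the upper-triangular block form in part (ii). Conversely, if $(A,B)$ has one of the stated block forms, the same block computation shows that $B^2=-B$ and the corresponding relation from Proposition~\ref{tangtang3} hold automatically, so $(M,p_i)$ is a $(\bfk[x],P_i)$-module.

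There is no real obstacle: the only step worth emphasizing is that diagonalizability of $B$ is what permits us to put $B$ in the clean block form $\mathrm{diag}(-I_k,0)$ before analyzing $A$; without this, the interaction between the two relations would look much more tangled. Note also that the statement allows the degenerate values $k=0$ and $k=n$, corresponding respectively to $B=0$ (so $p_i=0$) and $B=-I_n$ (so $p_i=-\mathrm{Id}_M$), in which case the block conditions on $A$ become vacuous, consistent with Corollary~\ref{coro:onedim} when $n=1$.
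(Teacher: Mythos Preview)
Your proposal is correct and follows essentially the same approach as the paper's own proof: both invoke Proposition~\ref{tangtang3}, diagonalize $B$ via $B^2=-B$ to the form $\mathrm{diag}(-I_k,0)$, and then read off the constraint on the off-diagonal block of $A$ from $BA=-BAB$ or $AB=-BAB$. Your version is slightly more explicit in displaying the block products and in justifying diagonalizability via the minimal polynomial, but the argument is the same.
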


\begin{proof}
If $(M,p_i)$ is a $n$ dimensional $(\bfk[x],P_i)$-module, by Proposition \ref{tangtang3} we know that $B^2=-B$.
Then $B$ is similar to a diagonal matrix with diagonal elements $-1$ or $0$. Select the appropriate $\bfk$-basis of $M$, we can assume that $B$ is of the form
$$
B=\begin{bmatrix} -I_k & \\ & 0 \end{bmatrix},
$$
where $k$ is the rank of $B$; and here we specify that $I_0=0$. Now let $A$ is of the form
$$A=\begin{bmatrix} A_1 & A_2\\ A_3& A_4 \end{bmatrix}$$
where $A_1\in M_k(\bfk)$ and $A_4\in M_{n-k}(\bfk)$.
When $i=1, 4$, by Proposition \ref{tangtang3} we also have $BA=-BAB$. From this we obtain that $A_2=0$. When $i=2, 3$, by Proposition \ref{tangtang3} we have $AB=-BAB$ and so one has $A_3=0$.  This proves the necessity. The proof of sufficiency can be verified directly. \end{proof}

Theorem \ref{tang1} gives the complete classification of Rota-Baxter modules of polynomial Rota-Baxter of weigh nonzero (translated into the case of weigh $1$ ).  This problem is completed by resolving matrix equations $B^2=-B$ with $BA=-BAB$ or $AB=-BAB$. It is relatively simple since the equation $B^2=-B$ makes the form of $B$ is easy to determine. Next section we will consider the modules of Rota-Baxter algegra $(x {\bf k} [x], P)$ and we will solve matrix equation including only $AB=-BAB$ which makes the problem to be interesting.  It should be pointed out that the irreducible modules of Rota-Baxter algegra $( {\bf k} [x], P)$ also can be discussed in a similar way to  $( x{\bf k} [x], P)$-modules, see Corollary \ref{tangxzhong}.

\section{Modules of Rota-Baxter algebra $(x {\bf k} [x], P)$} \label{txm-->Ozyy}

The polynomial algebra $\bfk [x]$ has an important subalgebra as
$$x {\bf k} [x]=\{xf| f\in \bfk [x]  \}\cong \bfk[x] / \bfk.$$
In this section, we will study the modules of the subalgebra $(x {\bf k} [x], P)$ of Rota-Baxter algegra $({\bf k} [x], P)$ of weigh nonzero.  Here we take the Rota-Baxter operator $P$ on $x {\bf k} [x]$ by  restriction of Rota-Baxter operator of ${\bf k} [x]$ given by Proposition \ref{thmnonzero}. In view of Proposition \ref{tangtang1}, below we consider the Rota-Baxter algegra $(x{\bf k} [x], P)$ with the Rota-Baxter operator $P: x{\bf k} [x]\rightarrow x{\bf k} [x]$ of weight $1$ given by $$P(x^n)=-x^n$$ for all $n\in \mathbb{Z}$ with $n\ge 1$.

\subsection{The describe of modules of Rota-Baxter algebra $(x {\bf k} [x], P)$}
\

Let ${\Ixq}$ be the ideal of $\bfk \langle x,y\rangle$ generated by the set
\begin{equation}
\mathcal{X}=\{P(f)y-yP(f)-yfy-yf~|~f\in x\bfk[x]\},
\end{equation}
and $\mathcal{J} = \bfk \langle x,y\rangle/{\Ixq}$.

For any $m\in \mathbb{Z}$ with $m\ge 1$,  we have
$$
P(x^{m})y-yP(x^{m})-yx^{m}y-yx^m=-yx^my-x^{m}y.
$$
Note that operators $P$ is $\bfk$-linear, so the set
$$
\widetilde{\mathcal{X}}=\{x^{m}y+yx^my~|~m\ge 1\}
$$
also generates ${\Ixq}$. Further, similar to the proof of Lemma \ref{lem:simplify} we have:

\begin{lemma}\label{lem:simplify1}
If we let $\widehat{\mathcal{X}}=\{xy+yxy\}$, then ${\Ixq}$ is generated by $\widehat{\mathcal{X}}$. Namely, we have
$$ \mathcal{J}={\bf k}\langle x,y \rangle/(xy+yxy).$$
\end{lemma}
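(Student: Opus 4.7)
The plan is to prove the equality $\Ixq = \widehat{\Ixq}$ (where $\widehat{\Ixq}$ denotes the ideal generated by $\widehat{\mathcal{X}} = \{xy+yxy\}$) by two opposite inclusions, following the template of Lemma \ref{lem:simplify}. The inclusion $\widehat{\Ixq}\subseteq \Ixq$ is immediate: taking $m=1$ shows $xy+yxy \in \widetilde{\mathcal{X}} \subseteq \Ixq$, and since $\Ixq$ is a two-sided ideal containing this element, it contains $\widehat{\Ixq}$.

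For the reverse inclusion $\Ixq \subseteq \widehat{\Ixq}$, since the set $\widetilde{\mathcal{X}}=\{x^m y + yx^m y \mid m\geq 1\}$ generates $\Ixq$, it suffices to prove that $x^m y + yx^m y \in \widehat{\Ixq}$ for every integer $m\geq 1$. The natural approach is induction on $m$. The base case $m=1$ is the generator itself. For the inductive step, I would verify the free-algebra identity
$$yx^{m+1}y + x^{m+1}y = (yx^m + x^m)(yxy + xy) - (yx^m y + x^m y)xy,$$
which is precisely the identity used in the second half of the induction in Lemma \ref{lem:simplify} (and can be checked by expanding both sides word by word). Given that $xy+yxy \in \widehat{\Ixq}$, the first summand on the right is in $\widehat{\Ixq}$ because $\widehat{\Ixq}$ is a two-sided ideal; the second summand lies in $\widehat{\Ixq}$ by the inductive hypothesis. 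Hence $x^{m+1}y + yx^{m+1}y \in \widehat{\Ixq}$, closing the induction.

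The only subtle point—and where the argument differs cosmetically from Lemma \ref{lem:simplify}—is that here the generator $y+y^2$ (which would correspond to $m=0$) is absent, since $f\in x\bfk[x]$ forces $m\geq 1$. This is not a genuine obstacle: the identity above propagates from step $m$ to step $m+1$ using only terms whose $x$-exponent is $\geq 1$, so the induction closes on $\widehat{\mathcal{X}}=\{xy+yxy\}$ alone, without ever invoking $y+y^2$. I therefore expect the proof to be essentially a shorter, single-induction version of that of Lemma \ref{lem:simplify}, with no new technical content beyond verifying the algebraic identity.
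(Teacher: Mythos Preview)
Your proposal is correct and follows exactly the approach the paper intends: the paper's own proof simply reads ``similar to the proof of Lemma~\ref{lem:simplify}'', and your argument is precisely the relevant half of that proof (the $\widehat{\mathcal{X}_2}$ induction) restricted to $m\geq 1$. Your observation that the $m=0$ generator $y+y^2$ is never needed here is the only point worth noting, and you have handled it correctly.
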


Now we establish the relationship between $\mathcal{J}$-modules and $(x\bfk[x],P)$-modules. Similar to Proposition \ref{prop:modulerel}, we get
\begin{prop}\label{prop:modulerel1}
Let $M$ be a $\mathcal{J}$-module. Define a $\bfk$-linear map $p$ on $M$ by
$$
p(v) = yv, \quad \forall v \in M.
$$
Then, $(M,p)$ is a $(x\bfk[x],P)$-module. Conversely, if $(M,p)$ is a $(x\bfk[x],P)$-module and we define
$$
yv = p(v), \quad \forall v \in M,
$$
then $M$ is a $\mathcal{J}$-module.
\end{prop}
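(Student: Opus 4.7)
The plan is to mirror the argument given for Proposition \ref{prop:modulerel} with only cosmetic adjustments for the non-unital subalgebra $x\bfk[x]$. Both directions reduce to comparing the defining relations of $\mathcal{J}$ with the Rota-Baxter module identity (\ref{eq:meq}) specialized via $P(x^m)=-x^m$, so no new ideas beyond careful bookkeeping are required.

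For the forward implication I would take a $\mathcal{J}$-module $M$ and define $p(v):=yv$, which is automatically $\bfk$-linear. By the construction of ${\Ixq}$, each element $P(x^m)y-yP(x^m)-yx^m y-yx^m$ (for $m\geq 1$) lies in ${\Ixq}$ and thus annihilates $M$. Evaluating this element on a vector $v$ and rewriting every occurrence of $y\cdot u$ as $p(u)$ produces exactly (\ref{eq:meq}) at the monomial $f=x^m$. Since $P$ is $\bfk$-linear, linearity in $f$ propagates the identity to every $f\in x\bfk[x]$.

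For the converse, starting with an $(x\bfk[x],P)$-module $(M,p)$, I would promote $M$ to a $\bfk\langle x,y\rangle$-module by first extending the $x\bfk[x]$-action to a $\bfk[x]$-action via the convention $1\cdot v=v$, and then letting $y$ act on $M$ as $p$. By Lemma \ref{lem:simplify1}, the ideal ${\Ixq}$ is generated by the single element $xy+yxy$, so it suffices to check that $(xy+yxy)v=0$, i.e.\ $xp(v)+p(xp(v))=0$, for all $v\in M$. Applying (\ref{eq:meq}) at $f=x$ together with $P(x)=-x$, the terms $\pm p(xv)$ cancel and the identity collapses exactly to $xp(v)+p(xp(v))=0$. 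Hence ${\Ixq}\subseteq {\rm ann}\, M$ and $M$ is a $\mathcal{J}$-module.

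The only genuine subtlety worth flagging is the bookkeeping between the non-unital algebra $x\bfk[x]$ and the unital algebra $\bfk\langle x,y\rangle$ in the converse direction; once the convention $1\cdot v=v$ is fixed, the rest of the argument is a direct transcription of identities between the two sides and presents no technical obstacle.
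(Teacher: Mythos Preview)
Your proposal is correct and follows essentially the same approach as the paper, which simply defers to the proof of Proposition~\ref{prop:modulerel}: use that the defining relations of ${\Ixq}$ vanish on $M$ to obtain the Rota--Baxter module identity, and conversely show ${\Ixq}\subseteq \mathrm{ann}\,M$. Your use of Lemma~\ref{lem:simplify1} to reduce the converse to the single generator $xy+yxy$ is a mild streamlining of the paper's argument (which checks all generators at once), and your explicit handling of the non-unital/unital passage is a helpful clarification the paper leaves implicit.
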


Due to Proposition \ref{prop:modulerel1}, the study of $(x\bfk[x],P)$-modules becomes the study of $\mathcal{J}$-modules in the usual sense. By Proposition \ref{prop:modulerel1} and Lemma \ref{lem:simplify1}, for a $x\bfk[x]$-module $M$, let $\tau(v)=xv, \forall v \in M$, and thus $\tau \in {\rm End}_{\bfk}(M)$. Like in Section \ref{Jordan}, the linear map induced by the action of $x$ is always denoted by $\tau$. Therefore, a $(x\bfk[x],P)$-module $(M,p)$ can be regarded as a $\bfk$-vector space $M$ with two $\bfk$-linear maps, $\tau$ and $p$. Similar to Proposition \ref{tangtang3}, we have
\begin{prop}\label{tangtang31}
Let $M$ be a $x\bfk[x]$-module, $p:M\longrightarrow M$ a $\bfk$-linear map, and fix a $\bfk$-basis $v_{1}, v_{2}, \cdots, v_{n}$ of $M$. Let the matrices of $\tau$ and $p$ corresponding to the basis $v_{1},v_{2},\cdots,v_{n}$ be $A$ and $B$, respectively. Then, $(M,p)$ is a $(x\bfk[x],P)$-module if and only if
\begin{equation}\label{zhong1}
AB=-BAB.
\end{equation}
\end{prop}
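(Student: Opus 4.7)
The plan is to mirror the strategy already used for Proposition \ref{tangtang3}, but with the simpler presentation supplied by Lemma \ref{lem:simplify1}. The statement is really a matrix translation of a single noncommutative relation, so the argument should be short.

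First I would invoke Proposition \ref{prop:modulerel1} to replace the Rota-Baxter module condition by an ordinary module condition: specifying a $(x\bfk[x],P)$-module structure $(M,p)$ on the $\bfk$-vector space $M$ is equivalent to specifying a $\mathcal{J}$-module structure on $M$ whose $y$-action is $p$ and whose $x$-action is $\tau$. Since Lemma \ref{lem:simplify1} identifies $\mathcal{J}=\bfk\langle x,y\rangle/(xy+yxy)$, the $\mathcal{J}$-module condition reduces to the single requirement that the element $xy+yxy$ annihilates every vector of $M$; all other relations in the defining ideal follow from this one together with $\bfk$-linearity (note that, in contrast to Proposition \ref{tangtang3}, there is no generator $y+y^2$ to worry about, because that relation originated from $P(1)$ and the constant $1$ does not lie in $x\bfk[x]$).

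Second I would translate the annihilation condition into matrix form on the fixed basis $v_1,\dots,v_n$. With the conventions $\tau(v_j)=\sum_i A_{ij}v_i$ and $p(v_j)=\sum_i B_{ij}v_i$, a short computation shows that the operator $xy$ acts on $M$ by the matrix $AB$ and the operator $yxy$ acts by the matrix $BAB$. Hence $(xy+yxy)v=0$ for all $v\in M$ if and only if $AB+BAB=0$, i.e.\ equation \eqref{zhong1}. Both implications of the biconditional come out simultaneously from this calculation, so no separate sufficiency argument is needed beyond reversing the steps.

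I do not expect any serious obstacle: the only thing to be careful about is the left/right convention for matrix representation of a composition of operators, so that $xy$ really corresponds to $AB$ and not $BA$, and similarly for $yxy$. Once that is fixed consistently with Proposition \ref{tangtang3}, the proof is immediate and can simply be written as ``analogous to the proof of Proposition \ref{tangtang3}, using Proposition \ref{prop:modulerel1} and Lemma \ref{lem:simplify1} in place of Proposition \ref{prop:modulerel} and Lemma \ref{lem:simplify}, and observing that the relation $y+y^2$ is absent here.''
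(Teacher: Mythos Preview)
Your proposal is correct and follows exactly the route the paper indicates: the paper itself provides no separate proof but simply writes ``Similar to Proposition~\ref{tangtang3}, we have'' after invoking Proposition~\ref{prop:modulerel1} and Lemma~\ref{lem:simplify1}, which is precisely the reduction you describe. Your observation that the relation $y+y^{2}$ is absent because $1\notin x\bfk[x]$ is the only point that genuinely distinguishes this case from Proposition~\ref{tangtang3}, and you have identified it correctly.
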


\subsection{The solution to matrix equation $AB=-BAB$.}

\

In order to give the classification of $(x\bfk[x],P)$-module, we should to solve the matrix equation (\ref{zhong1}). We first give some conclusions.
Recall that the Jordan block of size $k$ is a $k\times k$ matrix of the following form
$$
J_{k}(b) = \begin{bmatrix}
b&1&\cdots&0&0 \\
0&b&\ddots&0&0 \\
\vdots&\vdots&\ddots&\ddots&\vdots \\
0&0&\cdots&b&1 \\
0&0&\cdots&0&b
\end{bmatrix}.
$$
It is clear that $J_{k}(b)=bI_k+J_{k}(0)$ where $J_{k}(0)$ is a nilpotent Jordan block.

\begin{prop}\label{zhongt1}
Suppose that $t,s\in \mathbb{Z}$ with $t,s\ge 1$ and $b_1,b_2\in {\bf k}$. Then $X$ is the solution of the matrix equation
\begin{equation}\label{zyy1}
XJ_t(b_2)=-J_s(b_1) X J_t(b_2)
\end{equation}
if and only if one of the following cases holds:
\begin{enumerate}
\item When $(b_1, b_2)=(-1, 0)$, then $X$ has the form
\begin{equation}\label{tintoz1}
X=\begin{pmat}[{..|.}]
\ast &\cdots& \ast & \ast\cr\-
0&\ldots& 0 & \ast\cr
\vdots&\ddots&\vdots & \vdots \cr
0&\ldots & 0&\ast \cr
\end{pmat}_{s\times t},
\end{equation}
here and below the symbol $\ast$ means it can take any element in ${\bf k}$;
\item  When $b_1=-1$ and $b_2\neq 0$, then $X$ has the form
\begin{equation}\label{tintoz2}
X=\begin{pmat}[{..}]
\ast &\cdots& \ast & \ast\cr\-
0&\ldots& 0 & 0\cr
\vdots&\ddots&\vdots & \vdots\cr
0&\ldots & 0&0\cr
\end{pmat}_{s\times t};
\end{equation}

\item When $b_1\neq -1$ and $b_2=0$, then $X$ has the form
\begin{equation}\label{tintoz3}
X=\begin{pmat}[{..|.}]
0 &\cdots& 0 & \ast\cr
0&\ldots& 0 & \ast\cr
\vdots&\ddots&\vdots & \vdots \cr
0&\ldots & 0&\ast\cr
\end{pmat}_{s\times t};
\end{equation}

\item When $b_1\neq -1$ and $b_2\neq 0$, then $X=0$.

\end{enumerate}
\end{prop}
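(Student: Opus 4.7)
The strategy is to rewrite the equation so that it becomes a product of three matrices equal to zero, then analyze each factor using the structure of Jordan blocks.

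First I would move everything to one side of \eqref{zyy1} to get
\[
\bigl(I_s + J_s(b_1)\bigr) X J_t(b_2) = 0.
\]
The key observation is that $I_s + J_s(b_1) = (1+b_1)I_s + J_s(0) = J_s(1+b_1)$, so the equation is equivalent to
\[
J_s(1+b_1)\, X\, J_t(b_2) = 0.
\]
This is the only ``clever'' step; everything else is bookkeeping. Note that $J_s(1+b_1)$ is invertible if and only if $b_1 \neq -1$, and $J_t(b_2)$ is invertible if and only if $b_2 \neq 0$, which is exactly the dichotomy that produces the four cases in the statement.

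Next I would split into the four cases. When $b_1 \neq -1$ and $b_2 \neq 0$ (Case 4), both outer factors are invertible, forcing $X = 0$. When $b_1 = -1$ and $b_2 \neq 0$ (Case 2), the equation reduces to $J_s(0) X = 0$; since left multiplication by $J_s(0)$ shifts the rows of $X$ up by one (and inserts a zero bottom row), this kills rows $2,\dots,s$ and leaves the first row free, giving \eqref{tintoz2}. Symmetrically, when $b_1 \neq -1$ and $b_2 = 0$ (Case 3), the equation reduces to $X J_t(0) = 0$; right multiplication by $J_t(0)$ shifts columns of $X$ to the right, so the first $t-1$ columns must vanish and the last column is free, giving \eqref{tintoz3}.

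The remaining case (Case 1), $b_1 = -1$ and $b_2 = 0$, is the only one requiring both shifts simultaneously. The equation becomes $J_s(0) X J_t(0) = 0$, and a direct entrywise calculation shows that $\bigl(J_s(0) X J_t(0)\bigr)_{i,j} = X_{i+1,\,j-1}$ for $1 \le i \le s-1$ and $2 \le j \le t$, and is zero otherwise. Setting these entries to zero forces $X_{i',j'} = 0$ whenever $2 \le i' \le s$ and $1 \le j' \le t-1$; that is, all entries must vanish except possibly those in the first row or the last column, which is precisely the shape in \eqref{tintoz1}. For each of the four cases the converse (that the displayed matrices actually solve the equation) is immediate by plugging back in. I do not foresee any genuine obstacle: the only place where one has to be careful is in tracking which row/column shift occurs when multiplying by $J_s(0)$ on the left versus $J_t(0)$ on the right, and in making sure the ``first row'' of Case 1 does not get double-counted with its ``last column'' at the $(1,t)$ entry, which is consistent since both prescriptions allow that entry to be free.
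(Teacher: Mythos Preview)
Your proof is correct and is in fact cleaner than the paper's argument. The paper does not spot the identity $I_s + J_s(b_1) = J_s(1+b_1)$; instead, it treats the cases $b_1=-1$ and $b_1\neq -1$ separately from the outset and, within the latter, further splits into $b_1=0$ and $b_1\neq 0,-1$. In the subcases where one of the Jordan blocks has nonzero eigenvalue, the paper does not simply invoke invertibility but rather rewrites, e.g., $J_s(0)X(b_2I_t+J_t(0))=0$ as $J_s(0)X = J_s(0)X(-b_2^{-1}J_t(0))$ and iterates using nilpotency of $J_t(0)$ to conclude $J_s(0)X=0$; an analogous nilpotency iteration is used on the left to reach $XJ_t(b_2)=0$ when $b_1\neq -1$. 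Your single rewriting $J_s(1+b_1)\,X\,J_t(b_2)=0$ collapses all of these iterative arguments into the observation that a Jordan block is invertible precisely when its diagonal entry is nonzero, and the remaining entrywise analysis (the row/column shift descriptions of multiplication by $J_s(0)$ and $J_t(0)$) matches the paper exactly. What your approach buys is brevity and a uniform treatment of all four cases; what the paper's approach buys is nothing beyond avoiding the one-line identity you used.
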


\begin{proof}
  The proof of sufficiency can be verified directly. Now we prove the necessity.
  Denote $X=[x_{ij}]\in \bfk^{s\times t}$.
  It will be divided into the following $2$ cases according to the values of $b_1,b_2$.

{\it Case 1.}  When $b_1=-1$.  By (\ref{zyy1}), one has $XJ_t(b_2)=-(-I_s+J_s(0)) X J_t(b_2)$ which implies
$$ J_s(0) X J_t(b_2)=0.  $$

If $b_2=0$, then above equation becomes $ J_s(0) X J_t(0)=0, $ that is
$$
\begin{bmatrix}
0&1&\cdots&0 \\
\vdots&\vdots&\ddots&\vdots \\
0&0&\cdots&1 \\
0&0&\cdots&0
\end{bmatrix}_{s\times s} \cdot
\begin{bmatrix}
x_{11}&x_{12}&\cdots&x_{1t} \\
x_{21}&x_{22}&\cdots&x_{2t} \\
\vdots&\vdots&\ddots&\vdots \\
x_{s1}&x_{s2}&\cdots&x_{st}
\end{bmatrix} \cdot
\begin{bmatrix}
0&1&\cdots&0 \\
\vdots&\vdots&\ddots&\vdots \\
0&0&\cdots&1 \\
0&0&\cdots&0
\end{bmatrix}_{t\times t}=0.
$$
By this with a direct computation, we see that $X$ must take the form of (\ref{tintoz1}).

If $b_2\neq 0$, then
the above equation yields $J_s(0) X (b_2I_t+J_t(0))=0$ and so that $J_s(0) X =J_s(0) X(- b_2^{-1}J_t(0))$. Thus, in view of $J_t(0)$ is nilpotent we get
$$
J_s(0) X =J_s(0) X(- b_2^{-1}J_t(0))^2=J_s(0) X(- b_2^{-1}J_t(0))^3=\cdots= J_s(0) X 0=0.
$$
In the other words,
$$
\begin{bmatrix}
0&1&\cdots&0 \\
\vdots&\vdots&\ddots&\vdots \\
0&0&\cdots&1 \\
0&0&\cdots&0
\end{bmatrix}_{s\times s} \cdot
\begin{bmatrix}
x_{11}&x_{12}&\cdots&x_{1t} \\
x_{21}&x_{22}&\cdots&x_{2t} \\
\vdots&\vdots&\ddots&\vdots \\
x_{s1}&x_{s2}&\cdots&x_{st}
\end{bmatrix}
=0.
$$
By the above equation through simple calculation, we obtain that  $X$ must take the form of (\ref{tintoz2}).

{\it Case 2.}  When $b_1\neq -1$. We first claim that the following equation holds:
\begin{equation}\label{tangontozhong}
XJ_t(b_2)=0.
\end{equation}

If $b_1=0$, then Equation (\ref{zyy1}) tells us that $$XJ_t(b_2)=-J_s(0) X J_t(b_2)=(-J_s(0))^2 X J_t(b_2)=\cdots.$$  Since $-J_s(0)$ is nilpotent, we have
Equation (\ref{tangontozhong}) holds.

If $b_1\neq 0$, that is $b_1\neq 0, -1$.  In view of  (\ref{zyy1}), one has
$$
(I_s-b_1^{-1}J_s(b_1))X J_t(b_2)=X J_t(b_2)-b_1^{-1}J_s(b_1)X J_t(b_2)=\frac{b_1+1}{b_1}X J_t(b_2).
$$
Furthermore, by the above equation with $J_s(b_1)=b_1I_s+J_s(0)$, we get
$$
X J_t(b_2)=(-(1+b_1)^{-1}J_s(0))X J_t(b_2)=(-(1+b_1)^{-1}J_s(0))^2X J_t(b_2)=\cdots.
$$
Again since $-(1+b_1)^{-1}J_s(0)$ is nilpotent, we see that Equation (\ref{tangontozhong}) still holds.  The claim is proved.

When $b_2=0$,  by  (\ref{tangontozhong}) we have $XJ_t(0)=0$, i.e.,
$$
\begin{bmatrix}
x_{11}&x_{12}&\cdots&x_{1t} \\
x_{21}&x_{22}&\cdots&x_{2t} \\
\vdots&\vdots&\ddots&\vdots \\
x_{s1}&x_{s2}&\cdots&x_{st}
\end{bmatrix}\cdot
\begin{bmatrix}
0&1&\cdots&0 \\
\vdots&\vdots&\ddots&\vdots \\
0&0&\cdots&1 \\
0&0&\cdots&0
\end{bmatrix}_{t\times t}=0,
$$
which deduces that $X$ must take the form of (\ref{tintoz3}).

When $b_2\neq 0$, by (\ref{tangontozhong}) one has $X(b_2I_t+J_t(0))=0$ and so that
$$
X=X(-b_2^{-1}J_t(0))=X(-b_2^{-1}J_t(0))^2=\cdots=X0=0
$$
since $-b_2^{-1}J_t(0)$ is nilpotent.  The proof is completed.
\end{proof}

Apply Proposition \ref{zhongt1} to $s=t$ and $b_1=b_2$, it follows that
\begin{coro}\label{ttzz}
Suppose that $t\in \mathbb{Z}$ with $t\ge 1$ and $b\in {\bf k}$. Then $X$ is the solution of the matrix equation
\begin{equation}\label{zyy2}
XJ_t(b)=-J_t(b) X J_t(b)
\end{equation}
if and only if one of the following cases holds:
\begin{enumerate}
\item  When $b=-1$, then $X$ has the form (\ref{tintoz2});

\item When $b=0$, then $X$ has the form (\ref{tintoz3});

\item When $b\neq -1, 0$, then $X=0$.
\end{enumerate}
\end{coro}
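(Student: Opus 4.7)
The plan is to derive Corollary \ref{ttzz} as a direct specialization of Proposition \ref{zhongt1} by setting $s=t$ and $b_1=b_2=b$. Since Proposition \ref{zhongt1} already covers all possible pairs $(b_1,b_2)\in\bfk^2$, I just need to check which of its four cases are compatible with the constraint $b_1=b_2$ and translate each surviving case into the corresponding statement about $b$.

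First I would note that case (a) of Proposition \ref{zhongt1} requires $(b_1,b_2)=(-1,0)$, which is incompatible with $b_1=b_2$, so this case contributes nothing. Next, case (b) requires $b_1=-1$ and $b_2\neq 0$; imposing $b_1=b_2=b$ forces $b=-1$, and then the matrix $X$ has the form (\ref{tintoz2}), which matches item (i) of the corollary. Similarly, case (c) requires $b_1\neq -1$ and $b_2=0$; imposing $b_1=b_2=b$ forces $b=0$ (and the condition $b\neq -1$ is then automatic), and $X$ has the form (\ref{tintoz3}), which matches item (ii). Finally, case (d) requires $b_1\neq -1$ and $b_2\neq 0$; under $b_1=b_2=b$ this becomes $b\notin\{-1,0\}$ and forces $X=0$, which matches item (iii).

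Thus the cases $b=-1$, $b=0$, and $b\notin\{-1,0\}$ exhaust all possibilities for $b\in\bfk$, and in each case the description of $X$ is read off directly from the corresponding case of Proposition \ref{zhongt1}. No new computation is required, since sufficiency was already established in the proof of the more general proposition and necessity follows by the same case analysis. There is no real obstacle here; the only thing to be careful about is verifying that the constraint $b_1=b_2$ is indeed incompatible with case (a) of Proposition \ref{zhongt1} (so that no legitimate solution is missed) and that the three remaining cases cover every value of $b$ exactly once.
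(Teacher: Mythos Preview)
Your proposal is correct and is exactly the approach the paper takes: the corollary is stated immediately after Proposition~\ref{zhongt1} with the one-line justification ``Apply Proposition~\ref{zhongt1} to $s=t$ and $b_1=b_2$,'' and your case-by-case check that (a) is ruled out while (b), (c), (d) specialize to items (i), (ii), (iii) is precisely the content of that sentence.
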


\begin{theorem} \label{tangXzhongO}
Suppose that $A, B\in M_n({\bf k})$ satisfying $AB=-BAB$. Then there is an invertible matrix $P$ such that
$$
A=P^{-1}
\begin{bmatrix}
X_{11}&X_{12}& \ldots &X_{1l}\\
X_{21}& X_{21}&\ldots&X_{2l}\\
\vdots&\vdots&\ddots&\vdots\\
X_{l1}&X_{l2}&\ldots&X_{l2}
\end{bmatrix}
P
$$
and
$$
B=P^{-1}
\begin{bmatrix}
J_{p_1}(b_1)&0& \ldots &0\\
0& J_{p_2}(b_2)&\ldots&0\\
\vdots&0&\ddots&0\\
0&0&\ldots&J_{p_l}(b_l)
\end{bmatrix}
P
$$
where $b_i\in {\bf k}$, $p_i\in \mathbb{N}\setminus \{0\}$ and $X_{ij}\in {\bf k}^{p_i\times p_j}$  such that
$$
X_{ij}J_{p_j}(b_j)=-J_{p_i}(b_i)X_{ij}J_{p_j}(b_j)
$$
for all $i,j=1, \cdots, l$. Thus $X_{ij}$ can be determined by Proposition \ref{zhongt1}.
\end{theorem}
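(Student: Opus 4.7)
The plan is to reduce the matrix equation $AB = -BAB$ to a decoupled system of block equations via Jordan normal form, and then invoke Proposition \ref{zhongt1} block by block. Since ${\bf k}$ is algebraically closed, the first step is to choose an invertible matrix $P$ so that $\widetilde{B} := PBP^{-1}$ is in Jordan normal form, that is, $\widetilde{B} = \diag(J_{p_1}(b_1), \ldots, J_{p_l}(b_l))$ for some $b_i \in {\bf k}$ and $p_i \in \mathbb{N}\setminus\{0\}$. Setting $\widetilde{A} = PAP^{-1}$, the equation $AB = -BAB$ is preserved under simultaneous conjugation, so it transforms into $\widetilde{A}\widetilde{B} = -\widetilde{B}\widetilde{A}\widetilde{B}$.

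The next step is to partition $\widetilde{A}$ conformally with the block diagonal structure of $\widetilde{B}$, writing $\widetilde{A} = [X_{ij}]_{1 \leq i,j \leq l}$ with $X_{ij} \in {\bf k}^{p_i \times p_j}$. Because $\widetilde{B}$ is block diagonal, the rules for block multiplication give that the $(i,j)$-block of $\widetilde{A}\widetilde{B}$ is $X_{ij} J_{p_j}(b_j)$, while the $(i,j)$-block of $\widetilde{B}\widetilde{A}\widetilde{B}$ is $J_{p_i}(b_i) X_{ij} J_{p_j}(b_j)$. Hence the single matrix equation decouples into the independent family
\begin{equation*}
X_{ij} J_{p_j}(b_j) = -J_{p_i}(b_i) X_{ij} J_{p_j}(b_j), \qquad 1 \leq i, j \leq l.
\end{equation*}

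Each of these block equations is exactly the form handled by Proposition \ref{zhongt1}, so the classification is complete: every admissible $X_{ij}$ is among the four explicit shapes listed there, depending on the pair $(b_i, b_j)$. The converse is immediate, since any collection of blocks $X_{ij}$ satisfying the block equations reassembles into $\widetilde{A}$ with $\widetilde{A}\widetilde{B} = -\widetilde{B}\widetilde{A}\widetilde{B}$, and conjugating back by $P^{-1}$ returns a pair $(A,B)$ with $AB = -BAB$. I do not expect a genuine obstacle here: the only point requiring care is the conformal partitioning of $\widetilde{A}$, which must use row and column block sizes matching the Jordan sizes $p_1, \ldots, p_l$ on the left and right factors of $\widetilde{B}$ respectively; once this is set up, the block decoupling is mechanical and the work is entirely absorbed by Proposition \ref{zhongt1}.
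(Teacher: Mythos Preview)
Your proposal is correct and follows essentially the same approach as the paper: put $B$ in Jordan normal form via conjugation, observe that the equation $AB=-BAB$ is conjugation-invariant, and then read off the block equations from the block-diagonal structure. The paper's own proof is in fact just a one-sentence sketch (``Suppose that $B$ has the Jordan decomposition $B=P^{-1}JP$ \ldots\ It follows by the condition $AB=-BAB$''), so your version simply supplies the details the paper leaves implicit.
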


\begin{proof}  Suppose that $B$ has the Jordan decomposition $B=P^{-1}J P$ where $P$ is an invertible matrix and $J$ is the Jordan canonical form of $B$.
It follows by the condition $AB=-BAB$.
\end{proof}

\subsection{Examples for $(x\bfk [x], P)$-modules.}

\

By Propositions \ref{tangtang31}, \ref{zhongt1} and Theorem \ref{tangXzhongO}, we can give a complete characterization of $(x\bfk [x], P)$-modules. From the view point, some examples are given below.

\begin{exam}
Suppose that $M$ is a $n$-dimensional $\bfk$-vector space and $x, p:M\rightarrow M$ are linear maps with matrices $A$ and $B$ corresponding to an appropriate basis of $M$ respectively.
Then for $n\in \{1,2\}$, the all $n$-dimensional $(x\bfk[x],P)$-module  $(M,p)$ are listed by the following:

\begin{enumerate}
\item  When $n= 1$, (i) $\forall A\in \bfk$, $B=0$; \  (ii) $\forall A\in \bfk$, $B=-1$;\ (iii) $A=0$, $\forall B\in \bfk$;

\item When $n=2$, for any $(a_1,a_2,a_3,a_4,b_2)\in \bfk^4$ with $b_2\neq -1, 0$,

(i) $\forall A\in M_2(\bfk)$, $B=0$; \

(ii) $\forall A\in M_2(\bfk)$, $B=-I_2$;\

(iii) $A=0$, $\forall B\in M_2(\bfk)$;

(iv) $A=\begin{bmatrix} a_1 & 0 \\ a_3 & 0 \end{bmatrix}$,  $B=\begin{bmatrix} 0 & 0 \\ 0 & b_2 \end{bmatrix}$;

 (v) $A=\begin{bmatrix} a_1 & a_2 \\ 0 &a_4  \end{bmatrix}$,  $B=\begin{bmatrix} -1 & 0 \\ 0 & 0 \end{bmatrix}$;

 (vi $A=\begin{bmatrix} 0 & a_2 \\ 0 &a_4  \end{bmatrix}$,  $B=\begin{bmatrix} -1 & 0 \\ 0 & b_2 \end{bmatrix}$;

 (vii) $A=\begin{bmatrix} 0 & a_2 \\ 0 &a_4  \end{bmatrix}$,  $B=\begin{bmatrix} 0& 1 \\ 0 & 0 \end{bmatrix}$;

 (viii) $A=\begin{bmatrix} a_1 & a_2 \\ 0 &0  \end{bmatrix}$,  $B=\begin{bmatrix} -1 & 1 \\ 0 & -1 \end{bmatrix}$.

\end{enumerate}

\end{exam}

\begin{exam}
Suppose that $M$ is a $3$-dimensional $\bfk$-vector space and $x, p:M\rightarrow M$ are linear maps with matrices $A$ and $B$ corresponding to an appropriate basis of $M$ respectively.
If $(M,p)$ is a $3$-dimensional $(x\bfk[x],P)$-module, then we have the following conclusions (where $(x_{ij})\in \bfk^{3\times 3}$):

\begin{enumerate}
\item If $B=\begin{bmatrix} -1& 0&0\\0&-1&0 \\ 0 & 0 &b_3\end{bmatrix}$ where $b_3\in \bfk\setminus\{-1,0\}$, then
$A=\begin{bmatrix} x_{11}& x_{12}&x_{13}\\x_{21}& x_{22}&x_{23} \\ 0 & 0 &0\end{bmatrix}$;

\item If $B=\begin{pmat}[{.|.}]
-1& 1&0\cr
0&-1&0 \cr \-
0 & 0 &b_3 \cr
\end{pmat}$
or $B=\begin{bmatrix} -1& 0&0\\0&b_2&0 \\ 0 & 0 &b_3\end{bmatrix}$ where $b_2,b_3\in \bfk\setminus\{-1,0\}$, or $B=\begin{bmatrix} -1& 1&0\\0&-1&1 \\ 0 & 0 &-1\end{bmatrix}$, then
$A=\begin{bmatrix} x_{11}& x_{12}&x_{13}\\0& 0&0 \\ 0 & 0 &0\end{bmatrix}$;

\item If $B=\begin{bmatrix} 0& 0&0\\0&b_2&0 \\ 0 & 0 &b_3\end{bmatrix}$ where $b_2,b_3\in \bfk\setminus\{-1,0\}$, then
$A=\begin{bmatrix} x_{11}&0&0\\ x_{21}& 0&0 \\  x_{31} & 0 &0\end{bmatrix}$;

\item If $B=\begin{pmat} [{.|.}]
0& 1&0\cr
0&0&0 \cr\-
0 & 0 &b_3\cr
\end{pmat}$
where $b_3\in \bfk\setminus\{-1,0\}$, then
$A=\begin{bmatrix} 0& x_{12}&0\\0& x_{22}&0 \\ 0 & x_{32} &0\end{bmatrix}$;

\item If $B=\begin{bmatrix}b_1& \alpha&0\\0&b_2&\beta \\ 0 & 0 &b_3\end{bmatrix}$  where $b_1,b_2, b_3\in \bfk\setminus\{-1,0\}$ and $\alpha,\beta\in \{0,1\}$, then
$A=0$;

\item If $B=
\begin{pmat}[{.|.}]
-1& 1&0\cr
0&-1&0 \cr\-
0 & 0 &0\cr
\end{pmat}$
 or
 $B=\begin{pmat}[{|..}]
 -1& 0&0\cr\-
 0&0&1\cr
 0 & 0 &0 \cr
 \end{pmat}$,
  or $B=\begin{bmatrix}-1& 0&0\\0&b_2&0 \\ 0 & 0 &0\end{bmatrix}$  where $b_2\in \bfk\setminus\{-1,0\}$, then
$A=\begin{bmatrix} x_{11}& x_{12}&x_{13}\\0& 0&x_{23} \\ 0 & 0 &x_{33}\end{bmatrix}$;

\item If
$B=
\begin{pmat} [{.|.}]
b_1& -1&0 \cr
0&b_1&0 \cr\-
0 & 0 &0 \cr
\end{pmat}$
 where $b_1\in \bfk\setminus\{-1,0\}$, or $B=\begin{bmatrix}0&1&0\\0&0&1\\ 0 & 0 &0\end{bmatrix}$, then
$A=\begin{bmatrix} 0& 0&x_{13}\\0& 0&x_{23} \\ 0 & 0 &x_{33}\end{bmatrix}$.
\end{enumerate}

\end{exam}

\begin{proof}
We without the generality suppose that $B$ is the Jordan canonical form, then $B$ has one, two or three Jordan blocks. When $B$ takes the one form of the cases listed above, then by Propositions \ref{tangtang31}, \ref{zhongt1} and Theorem \ref{tangXzhongO} one can obtain the form of $A$.
\end{proof}

Note that the above class of modules $(M,p)$ determined by matrix pairs of $A, B$ does not contain all of $3$-dimensional $(x\bfk[x],P)$-modules, the few remaining cases can be obtained in the similar way, we omit it here. For $n\ge 4$, we give some examples as follows.

\begin{exam}
Suppose that $M$ is a $n$-dimensional $\bfk$-vector space and $x, p:M\rightarrow M$ are linear maps with matrices $A$ and $B$ corresponding to an appropriate basis of $M$ respectively.
If $(M,p)$ is a $n$-dimensional $(x\bfk[x],P)$-module, then we have the following conclusions (where $(x_{ij})\in \bfk^{n\times n}$):

\begin{enumerate}
\item If $n=4$ and
$B=\begin{pmat}[{.|.}]
 0& 1&0&0\cr
 0&0&0&0 \cr\-
 0 & 0 &-1&1\cr
 0& 0 & 0&-1 \cr
 \end{pmat}$, then
$A=\begin{pmat}[{.|.}]
 0& x_{12}&0&0\cr
 0& x_{22}&0&0 \cr\-
  x_{31} & x_{32} &x_{33}&x_{34}\cr
  0& x_{42}&0&0 \cr
   \end{pmat}$;

\item If $n=5$ and
 $B=\begin{pmat}[{.||.}]
-1& 1&0&0&0\cr
0&-1&0&0 &0\cr\-
 0 & 0 &0&0&0\cr\-
 0& 0 & 0&2&1 \cr
 0&0&0&0&2 \cr
 \end{pmat}$, then
$A=\begin{pmat}[{.||.}]
 x_{11}& x_{12}&x_{13}&x_{14}&x_{15}\cr
   0&0&x_{23}&0&0\cr\-
    0&0&x_{33}&0&0\cr\-
    0& 0&x_{43}&0&0 \cr
     0&0&x_{53}&0&0 \cr
     \end{pmat}$;

\item If $n=6$ and
$B=\begin{pmat}[{.|.|.}]
0& 1&0&0&0&0\cr
0& 0&0&0&0&0\cr\-
 0 & 0 &3&1&0&0\cr
  0& 0 & 0&3&0&0 \cr\-
   0&0&0&0&-1&1\cr
   0&0&0&0&0&-1 \cr
   \end{pmat}$, then
$A=\begin{pmat}[{.|.|.}]
 0& x_{12}&0&0&0&0\cr
  0& x_{22}&0&0&0&0\cr\-
  0& x_{32}&0&0&0&0\cr
  0& x_{42}&0&0&0&0 \cr\-
   x_{51}& x_{52}& x_{53}&x_{54}&x_{55}&x_{56}\cr
    0& x_{62}&0&0&0&0 \cr
   \end{pmat}$.

 \item If $n=2k$ for $k\ge 1$ and
 $B={\rm diag}\left(\begin{bmatrix} -1&1 \\ 0&-1\end{bmatrix}, \cdots, \begin{bmatrix} -1&1 \\ 0&-1\end{bmatrix}\right)$, then
 $
A=
\begin{bmatrix}
X_{11}& \ldots &X_{1k}\\
\vdots&\ddots&\vdots\\
X_{k1}&\ldots&X_{k2}
\end{bmatrix}
$
where $X_{ij}$ is of the form $X_{ij}=\begin{bmatrix}\ast &\ast \\ 0&0 \end{bmatrix}, i, j=1, \cdots, k$, here and below the symbol $\ast$ means it can take any element in ${\bf k}$;

 \item If $n=2k$ for $k\ge 1$ and
 $B={\rm diag}\left(\begin{bmatrix} 0&1 \\ 0&0\end{bmatrix}, \cdots, \begin{bmatrix} 0&1 \\ 0&0\end{bmatrix}\right)$, then
 $
A=
\begin{bmatrix}
X_{11}& \ldots &X_{1k}\\
\vdots&\ddots&\vdots\\
X_{k1}&\ldots&X_{k2}
\end{bmatrix}
$
where $X_{ij}$ is of the form $X_{ij}=\begin{bmatrix}0&\ast\\0&\ast \end{bmatrix}, i, j=1, \cdots, k$.
 \end{enumerate}
\end{exam}

\subsection{Irreducible or indecomposable $(x\bfk [x], P)$-modules. }
\

Now, Proposition \ref{prop:modulerel1} shows that studying $(x\bfk[x],P)$-modules is equivalent to studying modules of the  plane $\mathcal{J}$ in the usual sense.
Thus, descriptions of the irreducible $\mathcal{J}$-module can be interpreted in terms of the $(x\bfk[x],P)$-module.  In this section we will study the irreducible and indecomposable $(x\bfk [x], P)$-modules and the same results also are available for $\mathcal{J}$-module. We will prove that there exists only $1$-dimensional irreducible $(x\bfk [x], P)$-module (or $\mathcal{J}$-module), but the indecomposable $(x\bfk [x], P)$-module can be of any dimension.

\begin{defn} Let $(R,P)$ is a Rota-Baxter algebra.
A nonzero $(R,P)$-module $(M,p)$ is called irreducible if the submodule of $(M,p)$ is either $(0,p)$ or $(M,p)$. $(M,p)$ is called indecomposable if $M \neq 0$ and $(M,p)$ is not the direct sum of its two proper submodules.
\end{defn}

\begin{theorem}\label{tcaoz}
Let $(M,p)$  be a $(x\bfk[x],P)$-module. Then $M$ is irreducible if and only if it is of dimension one.
\end{theorem}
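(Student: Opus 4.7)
The backward direction $(\Leftarrow)$ is immediate, since a one-dimensional module contains no nontrivial proper subspace. The real content is the forward direction, where I will show that if $\dim_{\bfk} M \geq 2$ then $(M,p)$ admits a proper nonzero submodule. By Proposition \ref{tangtang31}, once a basis is fixed, a submodule of $(M,p)$ is precisely a subspace of $M$ invariant under both $\tau$ (action of $x$, with matrix $A$) and $p$ (with matrix $B$), and the governing relation is $AB=-BAB$, equivalently $(I+B)AB=0$.

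My plan is to exhibit two canonical submodules of $(M,p)$ and use them, together with irreducibility, to force $A$ or $B$ to be trivial enough that a standard eigenvector argument finishes the job. First, $\operatorname{im}(p)$: it is tautologically $p$-stable, and it is $\tau$-stable because $AB=-BAB=B(-AB)$, so $A\bigl(\operatorname{im}(B)\bigr)\subseteq\operatorname{im}(B)$. Second, the $(-1)$-eigenspace $N:=\ker(\operatorname{id}_M+p)$: it is $p$-stable, and if $Bv=-v$ then the defining relation specializes to $-Av=ABv=-BABv=B(Av)$, giving $Av\in N$, so $N$ is $\tau$-stable as well.

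Now assume $(M,p)$ is irreducible, so each of $\operatorname{im}(p)$ and $N$ is either $0$ or $M$. Of the four combinations of these extremes one is impossible (requiring $p=0$ and $p=-\operatorname{id}_M$ simultaneously), leaving three. \emph{Case A}: $\operatorname{im}(p)=0$, i.e.\ $p=0$; then a submodule is just any $\tau$-invariant subspace, and an eigenvector of $A$ (existing because $\bfk$ is algebraically closed) spans a $1$-dimensional one. \emph{Case B}: $N=M$, i.e.\ $p=-\operatorname{id}_M$; then every subspace is $p$-stable, so submodules are again $\tau$-invariant subspaces, and the same eigenvector argument applies. \emph{Case C}: $\operatorname{im}(p)=M$ and $N=0$, so $B$ and $I+B$ are both invertible; then $(I+B)AB=0$ forces $AB=0$, and inverting $B$ forces $A=0$, after which any eigenvector of $B$ yields a $1$-dimensional submodule.

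In every case a $1$-dimensional proper submodule appears as soon as $\dim M\geq 2$, contradicting irreducibility. The only step that is more than bookkeeping is the $\tau$-invariance of $\operatorname{im}(p)$, which rests on the right-hand factorization $AB=B(-AB)$ of the Rota--Baxter relation; the rest is a clean case analysis exploiting the hypothesis that $\bfk$ is algebraically closed.
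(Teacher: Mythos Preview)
Your proof is correct. Both your argument and the paper's hinge on the observation that the $(-1)$-eigenspace $\ker(\operatorname{id}_M+p)$ is $\tau$-invariant, but the remaining analysis differs. The paper works entirely with eigenspaces of $p$: when $M_{-1}(p)\neq 0$ it restricts $\tau$ there and picks an eigenvector; when $M_{-1}(p)=0$ it either has $p=0$ or shows, for any other eigenvalue $\alpha$, that $\tau$ carries $M_\alpha(p)$ into $M_{-1}(p)=0$, so any eigenvector of $p$ is killed by $\tau$. Your route instead introduces a second canonical submodule, $\operatorname{im}(p)$, whose $\tau$-invariance follows from the factorization $AB=B(-AB)$; irreducibility then forces $\operatorname{im}(p)$ and $\ker(I+p)$ each to be $0$ or $M$, and a short trichotomy dispatches all surviving cases, the only nontrivial one being $B$ and $I+B$ both invertible, where $(I+B)AB=0$ collapses to $A=0$. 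Your use of $\operatorname{im}(p)$ is a clean structural shortcut that avoids the paper's eigenvalue-by-eigenvalue computation, at the cost of being slightly less explicit about which one-dimensional submodule is produced; either approach is perfectly adequate here.
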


\begin{proof}
It is enough to prove that every nonzero $(x\bfk[x],P)$-module $(M,p)$ has a $1$-dimensional submodule.
Due to Proposition \ref{tangtang31}, $M$ is a $n(>0)$-dimensional $\bfk$-vector space and $x, p:M\rightarrow M$ are linear maps with matrices $A$ and $B$ corresponding to an appropriate basis of $M$ respectively, satisfying $AB=-BAB$. Namely, we have $xp=-pxp$ as linear maps on $M$.

For $\alpha\in \bfk$, let
$$
M_{\alpha}(p)=\{v\in M| p(v)=\alpha v\}, \ \ M_{\alpha}(x)=\{v\in M| x v =\alpha v\}.
$$
Then $\alpha$ is an eigenvalue of $p$ (resp. $x$) if $M_{\alpha}(p)\neq 0$ (resp. $M_{\alpha}(x)\neq 0$).

{\it Case 1.} When $M_{-1}(p)\neq 0$.

 For any $v\in M_{-1}(p)$, then $p(v)=-v$. Therefore,
$$
(-1) xv=-x (-p(v))=(xp) (v)=(-pxp)(v)=-px(p(v))=p(xv),
$$
which yields that $xv \in M_{-1}(p)$.  In other words, the eigenspace $M_{-1}(p)$ is invariant under $x$. Note that $\bfk$ is algebraically closed. Hence the linear map $x|_{M_{-1}(p)}: M_{-1}(p)\rightarrow M_{-1}(p)$ has an eigenvector $u\in M_{-1}(p)$ such that $x|_{M_{-1}(p)}(u)=xu=\beta u$ for some $\beta\in \bfk$.  In view of $u\in M_{-1}(p)$ we also have $p(u)=-u$. Now let $N=\bfk u$ be a $1$-dimensional subspace of $M$. As we have seen, $x N\subseteq N$ and $p(N)\subseteq N$, then $N$ is a $1$-dimensional submodule of $M$.

{\it Case 1.} When $M_{-1}(p)= 0$.

If $M_{0}(p)= M$, then $p(v)=0$ for all $v\in M$.  Take an eigenvector $u\in M$  of linear map $x$ and let $N=\bfk u$. Similar to Case 1 we see that $N$ is a $1$-dimensional submodule of $M$. Otherwise one can find an element $\alpha\in \bfk\setminus\{-1, 0\}$ such that $M_{\alpha}(p)\neq 0$. We claim that $x M_{\alpha}(p) \subseteq M_{-1}(p)$. In fact, for any $v\in M_{\alpha}(p)$, we have $p(v)=\alpha v$, i.e., $v=\alpha^{-1}p (v)$. Therefore,
$$
p (xv)= p(x\alpha^{-1}p (v))=\alpha^{-1}(pxp)(v)=-\alpha^{-1}xp(v)=-\alpha^{-1}x(\alpha v)=(-1)xv.
$$
This proves the above claim. It follows by  $M_{-1}(p)= 0$ that $x M_{\alpha}(p)=0$. Now we let $u\in  M_{\alpha}(p)$ with $u\neq 0$ and $N=\bfk u$. Therefore, by $p(u)=\alpha u$ and $x u=0$ we see that $N$ is a $1$-dimensional submodule of $M$.  The proof is completed.
\end{proof}

Similarly, we have the same result for $(\bfk[x],P)$-module as follows.
\begin{coro}\label{tangxzhong}
Let $(M,p)$  be a $(\bfk[x],P)$-module. Then $M$ is irreducible if and only if it is of dimension one.
\end{coro}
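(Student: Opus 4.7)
The plan is to adapt the argument of Theorem \ref{tcaoz} to the present setting, exploiting the fact that for $(\bfk[x],P_i)$-modules the extra relation $p_i^2=-p_i$ (from Corollary \ref{thm:sim}) simplifies the eigenspace analysis. The ``if'' direction is immediate: a one-dimensional module has no proper nonzero submodule. So I will focus on the converse, showing that every nonzero $(\bfk[x],P_i)$-module $(M,p_i)$ contains a one-dimensional submodule, for each $i\in\{1,2,3,4\}$.

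First, because $p_i^2=-p_i$, the operator $p_i$ is diagonalisable with eigenvalues in $\{-1,0\}$, giving the regular-singular decomposition
\begin{equation*}
M = M_{-1}(p_i) \oplus M_0(p_i),
\end{equation*}
and at least one summand is nonzero. Next I locate an $x$-stable eigenspace by using the relation from Corollary \ref{thm:sim}. For $i\in\{2,3\}$ the relation $xp_i=-p_ixp_i$ applied to $v\in M_{-1}(p_i)$ yields $p_i(xv)=-xv$, so $M_{-1}(p_i)$ is $x$-invariant. For $i\in\{1,4\}$ the relation $p_ix=-p_ixp_i$ applied to $v\in M_0(p_i)$ yields $p_i(xv)=0$, so $M_0(p_i)$ is $x$-invariant. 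In either case I have singled out an $x$-invariant $p_i$-eigenspace.

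If the distinguished eigenspace is nonzero, algebraic closure of $\bfk$ produces an eigenvector $u$ of $x$ inside it, and then $\bfk u$ is stabilised by both $x$ and $p_i$ (which acts as the scalar $-1$ or $0$), giving a one-dimensional $(\bfk[x],P_i)$-submodule by Corollary \ref{coro:onedim}. In the degenerate sub-case where the distinguished eigenspace is zero, $M$ coincides with the other eigenspace, $p_i$ is a scalar operator on $M$ ($-I_M$ or $0$), and any $x$-eigenvector in $M$ generates a one-dimensional submodule.

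The argument is thus a streamlined version of the proof of Theorem \ref{tcaoz}: the presence of the extra relation $p_i^2=-p_i$ eliminates the need to consider eigenvalues of $p_i$ outside $\{-1,0\}$, so the only real step is to identify which of the two eigenspaces is $x$-invariant in each of the four cases. No serious obstacle is anticipated; the main care needed is keeping straight, for each $i$, which side the factor $p_i$ appears on in the defining relation of Corollary \ref{thm:sim} and correspondingly which eigenspace is stable under $x$.
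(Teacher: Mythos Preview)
Your proposal is correct and follows essentially the same approach as the paper, which simply says ``Similarly'' after Theorem~\ref{tcaoz}. You have in fact spelled out the adaptation explicitly: using $p_i^2=-p_i$ from Corollary~\ref{thm:sim} to restrict the eigenvalues of $p_i$ to $\{-1,0\}$, and then correctly identifying, according to whether the relation from Corollary~\ref{thm:sim} reads $xp_i=-p_ixp_i$ (for $i=2,3$) or $p_ix=-p_ixp_i$ (for $i=1,4$), which of the two eigenspaces is $x$-invariant.
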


\begin{theorem}\label{tganz}
Suppose that $(M,p)$ is an $n$-dimensional $(x\bfk[x],P)$-module such that the map $p:M\rightarrow M$ is indecomposable, i.e., the matrix of $p$ corresponding to an appropriate basis of $M$ has exactly one Jordan block. Then there is a basis  $\{\epsilon_1, \cdots, \epsilon_n\}$ of $M$ and $t_i, s_i\in \bfk, i=1, \cdots,n$ such that $x$ and $p$ act on $M$ are determined by the one of the following cases:

\begin{enumerate}
\item  For any element $v=k_1\epsilon_1+\cdots +k_n\epsilon_n\in M$,
\begin{eqnarray*}
&& x v=(k_1t_1+\cdots+k_nt_n)\epsilon_1, \\
&& p  (v) =(k_2-k_1)\epsilon_1+\cdots +(k_n-k_{n-1})\epsilon_{n-1}-k_n\epsilon_n;
\end{eqnarray*}

\item For any element $v=k_1\epsilon_1+\cdots +k_n\epsilon_n\in M$,
\begin{eqnarray*}
&& x v=k_n(s_1\epsilon_1+\cdots +s_1\epsilon_1), \\
&& p  (v) =k_2\epsilon_1+\cdots +k_n \epsilon_{n-1};
\end{eqnarray*}

\item  For any element $v=k_1\epsilon_1+\cdots +k_n\epsilon_n\in M$,
\begin{eqnarray*}
&& x v= 0, \\
&& p  (v) =(k_2+bk_1)\epsilon_1+\cdots +(k_n+bk_{n-1})\epsilon_{n-1}+bk_n\epsilon_n,
\end{eqnarray*}
where $b\in \bfk \setminus \{-1, 0\}$.
\end{enumerate}
\end{theorem}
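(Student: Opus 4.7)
The plan is to reduce the theorem directly to Corollary~\ref{ttzz}, which already solves the matrix equation $XJ_t(b)=-J_t(b)XJ_t(b)$ in the three relevant cases. By Proposition~\ref{tangtang31}, choosing any $\bfk$-basis of $M$ yields matrices $A$ and $B$ of $x$ and $p$ satisfying $AB=-BAB$. The hypothesis that $p$ is indecomposable says exactly that the Jordan canonical form of $B$ consists of a single block, so after a change of basis I may assume $B=J_n(b)$ for some $b\in\bfk$. With $B$ fixed in this form, the equation $AB=-BAB$ becomes precisely $AJ_n(b)=-J_n(b)AJ_n(b)$, and Corollary~\ref{ttzz} gives the three possible shapes of $A$ according to whether $b=-1$, $b=0$, or $b\notin\{-1,0\}$.

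Next I would carry out the translation from matrices to explicit actions in each of the three cases. Fix the basis $\{\epsilon_1,\dots,\epsilon_n\}$ in which $B=J_n(b)$. Reading off the columns of $J_n(b)$ gives $p(\epsilon_1)=b\epsilon_1$ and $p(\epsilon_j)=\epsilon_{j-1}+b\epsilon_j$ for $j\ge 2$, hence for $v=\sum_j k_j\epsilon_j$,
\begin{equation*}
p(v)=\sum_{j=1}^{n-1}(bk_j+k_{j+1})\epsilon_j+bk_n\epsilon_n.
\end{equation*}
Setting $b=-1$ reproduces the formula for $p(v)$ in case (1), $b=0$ gives case (2), and a general $b\ne -1,0$ gives case (3).

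The description of $x$ is then obtained by plugging the form of $A$ from Corollary~\ref{ttzz} into $xv=\sum_j k_j x(\epsilon_j)$. When $b=-1$, $A$ is of shape (\ref{tintoz2}), so only its first row is nonzero; writing $t_j:=A_{1j}$ yields $x(\epsilon_j)=t_j\epsilon_1$ and $xv=(k_1t_1+\cdots+k_nt_n)\epsilon_1$, matching case (1). When $b=0$, $A$ is of shape (\ref{tintoz3}), so only its last column is nonzero; writing $s_i:=A_{in}$ yields $x(\epsilon_j)=0$ for $j<n$ and $x(\epsilon_n)=\sum_i s_i\epsilon_i$, so $xv=k_n\sum_i s_i\epsilon_i$, matching case (2). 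When $b\notin\{-1,0\}$, Corollary~\ref{ttzz} forces $A=0$, hence $xv=0$, matching case (3).

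There is essentially no hard step: once the single-Jordan-block hypothesis is used to put $B$ in the canonical form $J_n(b)$, Corollary~\ref{ttzz} does all the real work. The only thing requiring care is the bookkeeping in translating the matrix shapes (\ref{tintoz2}) and (\ref{tintoz3}) into the coefficient formulas stated in (1)--(3), and making sure the indexing of the basis vectors matches the column-wise convention of $J_n(b)$ so that the shift $k_j\mapsto k_{j+1}$ appears correctly in the formula for $p(v)$.
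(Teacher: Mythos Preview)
Your proposal is correct and follows essentially the same approach as the paper: put $B$ in Jordan form $J_n(b)$ using the single-block hypothesis, invoke Proposition~\ref{tangtang31} to get $AJ_n(b)=-J_n(b)AJ_n(b)$, and apply Corollary~\ref{ttzz} to obtain the three shapes of $A$. The paper's proof is in fact terser than yours at the final step, merely stating that ``the actions of $x$ and $p$ on $v$ are easily determined,'' whereas you spell out the translation from the matrix forms (\ref{tintoz2}), (\ref{tintoz3}), and $A=0$ to the explicit coefficient formulas.
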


\begin{proof}
Since $p:M\rightarrow M$ is irreducible, we assume that the matrix of $p$ corresponding to basis $\{\epsilon_1, \cdots, \epsilon_n\}$ of $M$ has one Jordan block
as
$$
J_{n}(b) = \begin{bmatrix}
b&1&\cdots&0&0 \\
0&b&\ddots&0&0 \\
\vdots&\vdots&\ddots&\ddots&\vdots \\
0&0&\cdots&b&1 \\
0&0&\cdots&0&b
\end{bmatrix}
$$
for some $b\in \bfk$. Denote by $X$  the matrix of $x$ (regard as a linear map on $M$) corresponding to basis $\{\epsilon_1, \cdots, \epsilon_n\}$. It follow by Proposition \ref{tangtang31}
that $XJ_n(b)=-J_n(b) X J_n(b)$. Corollary \ref{ttzz} tells us that
\begin{enumerate}
\item  When $b=-1$, then
$$
X=\begin{pmat}[{...}]
t_1 &\cdots& t_{n-1} & t_n\cr\-
0&\ldots& 0 & 0\cr
\vdots&\ddots&\vdots & \vdots\cr
0&\ldots & 0&0\cr
\end{pmat}
$$
for some $t_1, \cdots, t_n\in \bfk$;

\item When $b=0$, then
$$
X=\begin{pmat}[{..|.}]
0 &\cdots& 0 & s_1\cr
0&\ldots& 0 & s_2\cr
\vdots&\ddots&\vdots & \vdots \cr
0&\ldots & 0&s_n\cr
\end{pmat}
$$
for some $s_1, \cdots, s_n\in \bfk$;

\item When $b\neq -1, 0$, then $X=0$.
\end{enumerate}
For every case, the actions of $x$ and $p$  on $v=k_1\epsilon_1+\cdots +k_n\epsilon_n\in M$ are easily determined, which yield the conclusion.
\end{proof}

\begin{remark}
Let $(M,p)$  be an $(x\bfk[x],P)$-module.  If the matrix of $p$ corresponding to an appropriate basis of $M$ has exactly one Jordan block (or equivalent we say that $p$ is indecomposable), then $(M,p)$ is indecomposable. We give all such indecomposable $(x\bfk[x],P)$-module in Theorem \ref{tganz} by determined the action of $x$, which implies that the indecomposable $(x\bfk [x], P)$-module can be of any dimension. In addition, it is natural to ask whether all the indecomposable $(x\bfk[x],P)$-modules are derived from indecomposable action of $p$ with some suitable $x$ and the answer is no. For example, let $M=\bfk \epsilon_{1} \oplus \bfk \epsilon_{2}$ with
$$
x(\epsilon_{1},\epsilon_{2}) = (\epsilon_{1},\epsilon_{2})\begin{bmatrix}2&1\\0&2 \end{bmatrix}, \quad p(\epsilon_{1},\epsilon_{2}) = (\epsilon_{1},\epsilon_{2})\begin{bmatrix}-1&0\\0&0 \end{bmatrix}.
$$
The $(M,p)$ is an indecomposable $(x\bfk[x],P)$-module since $M$ is a decomposable $x\bfk[x]$-module. But it is clear that $p$ is not indecomposable.
\end{remark}

\begin{remark}
Let $M=\bfk \epsilon_{1} \oplus \bfk \epsilon_{2}\oplus \bfk \epsilon_{3}$ with
$$
x(\epsilon_{1},\epsilon_{2}, \epsilon_{3}) = (\epsilon_{1},\epsilon_{2}, \epsilon_{3})\begin{pmat}[{|.}]0&0&0\cr\- 0&0&1\cr 0&0&0\cr \end{pmat}, \quad
p(\epsilon_{1},\epsilon_{2}, \epsilon_{3}) = (\epsilon_{1},\epsilon_{2}, \epsilon_{3})\begin{pmat}[{.|.}]-1&1&0\cr 0&-1&0\cr\- 0&0&0\cr \end{pmat}.
$$
Then it is easy to see that $(M,p)$ is an indecomposable $(x\bfk[x],P)$-module, but as an $x\bfk[x]$-module it is decomposable since $M=M_1\oplus M_2$ with $x\bfk[x]$-modules $M_1=\bfk\epsilon_{1}$ and $M_2= \bfk \epsilon_{2}\oplus \bfk\epsilon_{3}$, and $p$ is not indecomposable since $M=M_1^\prime\oplus M_2^\prime$ with $p$-invariant subspaces $M_1^\prime=\bfk\epsilon_{1}\oplus \bfk\epsilon_{2}$ and $M_2^\prime= \bfk\epsilon_{3}$. More examples can be viewed in the last section.
\end{remark}

\begin{remark}
As in pointed out in \cite{G-Lin}, the category $(x\bfk[x],P)\Mod$ of $(x\bfk[x],P)$-modules is an abelian category. There is a  forgetful functor $(x\bfk[x],P)\Mod\rightarrow x\bfk[x]\Mod$ forgetting the operator $p$, which is exact and faithful. This allows us apply specific examples of the abelian category to some deep problems.
\end{remark}

\section*{Acknowledgments}
This work is supported in part by National Natural Science Foundation of China (Grant No. 11771069) and the fund of Heilongjiang Provincial Laboratory of the Theory and Computation of Complex Systems.

\end{document}